\numberwithin{equation}{section}
\newtheorem{theorem}{Theorem}[section]
\newtheorem{proposition}[theorem]{Proposition}
\newtheorem{lemma}[theorem]{Lemma}
\newtheorem{follow}[theorem]{Corollary}
\theoremstyle{definition}
\newcommand{\bel}{\begin{equation} \label}
\newcommand{\ee}{\end{equation}}
\newcommand{\one}{\mathds{1}}
\newcommand{\Z}{{\mathbb Z}}
\newcommand{\R}{{\mathbb R}}
\newcommand{\N}{{\mathbb N}}
\newcommand{\re}{{\mathbb R}}
\newcommand{\rd}{{\mathbb R}^{2}}
\begin{document}

\begin{center}{\Large \bf Eigenvalue Asymptotics for a Schr\"odinger Operator with Non-Constant Magnetic Field Along One Direction }

\medskip

{\sc  Pablo Miranda}
\medskip

\end{center}

\bigskip

{\bf Abstract.} {\small We consider the discrete spectrum of
the two-dimensional  Hamiltonian $H=H_0+V$, where $H_0$ is a Schr\"odinger operator with a  non-constant magnetic field $B$
that depends  only on one of the spatial variables, and $V$ is an  electric
potential that  decays at infinity.  We study the accumulation rate of
 the  eigenvalues of H in the gaps of its essential spectrum.
First,  under certain general conditions on $B$ and $V$, we introduce effective Hamiltonians that govern the
main asymptotic term of the eigenvalue counting function. Further, we   use the effective Hamiltonians  to find the asymptotic behavior of the  eigenvalues
in the case where the   potential  V is a power-like decaying function and in the case where it is a compactly
supported function,  showing a semiclassical behavior of the eigenvalues
in the first case and a non-semiclassical behavior in the second one. We also provide a criterion for the
finiteness  of the number of eigenvalues in the gaps of  the essential spectrum of $H$}.\\

{\bf Keywords}: magnetic Schr\"odinger operators, spectral gaps,
eigenvalue distribution\\

{\bf  2010 AMS Mathematics Subject Classification}:  35P20, 35J10, 47F05, 81Q10\\

\bigskip
\section{Introduction}\setcounter{equation}{0}
\label{s1}
 Let  $\R^2\ni(x,y)\mapsto B(x)\in \R_+$  be a bounded magnetic field  and define  the  Schr\"odinger operator
\bel{dj1}
    H_0:= -\frac{\partial^2}{\partial x^2} +
    \left(-i\frac{\partial}{\partial y} - b(x)\right)^2,
		    \ee
where the second component of the   magnetic vector potential $\R^2\ni(x,y)\mapsto(0,b(x))\in\R^2$ is given by
\bel{b_poten} b(x)=\int_0^x B(t)\,dt.\ee
Let $V:\R^2\to [0,\infty)$   an electric potential that decays at infinity. Set $H=H_0+V$.  It is known that the essential spectrum of $H$, denoted by $\sigma_{\rm ess}(H)$, satisfies
\bel{spec}\sigma_{\rm{ess}}(H)=\bigcup_{j\in \N}{[{\mathcal E}_j^-,{\mathcal E}_j^+]},\ee with ${\mathcal E}_j^\pm\in [0,\infty)$. Suppose that there exists a finite gap in the essential spectrum of $H$, which in our context will be equivalent  to
\bel{cond1} {\mathcal E}_j^+<{\mathcal E}_{j+1}^-,  \ee
for some $ j\geq 1$ (see Section \ref{s2}). Then, it is possible  to define
\bel{njota}\mathcal{N}_j(\lambda):={\rm Tr}\one_{\left({\mathcal E}_j^++\lambda, {\mathcal E}^-_{j+1}\right)}(H), \quad \mbox{for}\,\,0<\lambda <{\mathcal E}_{j+1}^-- {\mathcal E}^+_{j},\ee
where $\one_{\omega}(\cdot)$ is the characteristic function of the set $\omega$. The function ${\mathcal N}_j$ counts the number of eigenvalues of $H$ on the interval $({\mathcal E}_j^++\lambda, {\mathcal E}^-_{j+1})$. Our purpose in this article is to describe the asymptotic behavior of   $\mathcal{N}_j(\lambda)$ as $\lambda$ goes to zero, for some types of non-constant magnetic fields $B$ and electric potentials $V$.

The asymptotic behavior of the function $\mathcal{N}_j$ has been systematically studied in the case of a magnetic field $B$ equal to a \emph{constant} (see  \cite{rai1}, \cite{ivrii}, \cite{raiwar}, \cite{meelroz}, \cite{filpush}, \cite{proz}, \cite{roz}). For this model exists a rather complete understanding of the behavior of  $\mathcal{N}_j$, according to the decaying regime at infinity of the function $V$. This includes  power-like, exponential and compactly supported regimes. An extension of these results was to consider the eigenvalue counting function for  Schr\"odinger operators with \emph{asymptotically constant} magnetic field  and decaying electric potential (see   \cite{ivrii}, \cite{rozta1}, \cite{rozta2}, and for  related problems  see \cite{pushroz2}, \cite{per}).   Other natural extensions  are the Schr\"odinger operators with \emph{unidirectionally constant}  magnetic field presented here. This last model  was first considered by A. Iwatsuka (with $V\equiv 0$) in order to give examples  of  magnetic Schr\"odinger operators with purely absolutely continuous spectrum  \cite{iwa}. The one particle system determined by this Hamiltonian presents some interesting transport and spectral properties which have been studied  in the mathematical  literature (see \cite{manpu}, \cite{exkov}, \cite{shi1}, \cite{shi2}, \cite{dgr}, \cite{lww}, \cite{hissocc2}, \cite{hprs}), as well as in the physics literature  (see e.g. \cite{mu}, \cite{cal}, \cite{gmz}, \cite{pr}, \cite{rmcpv}, \cite{dpccgf}).

For the ``Iwatsuka Hamiltonian'', the problem of  the asymptotic behavior of  a counting function of the form  \eqref{njota}  was already studied   in \cite{dhs}. In that article were considered the  eigenvalues below the bottom of the essential spectrum  of $H$, when the magnetic field $B(x)$ is  a step function that changes sign at zero. This problem was also addressed in \cite{shi1}, for a magnetic field with similar characteristics  to   the one that we will study here (see \eqref{b_bound}). We note that in \cite{dhs} the first band function of $H_0$ (see section \ref{s2}) has  a global  minimum  at a finite point of $\R$, while for the model in  \cite{shi1}, as well as  for the model in this article,  the band functions have  its extremal point   at infinity (for us is relevant  the supremum). This divergence implies that  the analysis of the counting function in our cases is quite different and slightly  more difficult than that  in \cite{dhs}. We also note that the condition required in \cite{shi1} to obtain the mentioned  property on the supremun of the band functions, is that the function $B(x)$ is monotone. In this article we relax somewhat this condition asking only  global monotonicity to $B$ (see \eqref{b_bound}).

In \cite{shi1} the behavior of ${\mathcal N}_j$ was obtained for  potentials  $V$ that decay at infinity as  $(x^2+y^2)^{-m/2}$ (see \eqref{apr14}, \eqref{jul10a} below),  supposing that   $0<m<1$. In Corollary \ref{V_power} we will present a result similar to the semiclassical one given in \cite{shi1},  which completes the description of the first asymptotic term of  ${\mathcal N}_j$ for power-like decaying potentials, that is we consider the case $m >1$. Furthermore, in Theorem \ref{eh} we give an effective Hamiltonian which permit us  to deal with   other types of decaying regimes of $V$. Namely, in Corollary \ref{coro1} we give a sufficient condition that guarantees the finiteness of  the number of eigenvalues of $H$ in  each gap of  $\sigma_{\rm{ess}}(H)$. This is  a geometric condition that depends on the set where $B$ reach its supremum and the support of $V$.

When the condition of Corollary \ref{coro1} does not hold, we can see   that  ${\mathcal N}_j$ is generically unbounded in each gap of  $\sigma_{\rm{ess}}(H)$, as  follows incidentally from  Corollary \ref{coro2} where we  give  asymptotic bounds for ${\mathcal N}_j$ if $V$ is of compact support. Contrary to Corollary \ref{V_power}, the behavior of ${\mathcal N}_j$ is not semiclassical in this situation, since a semiclassical formula would imply the finiteness of the number of eigenvalues. For compact supported potentials $V$, a different  non-semiclassical asymptotic behavior of the  eigenvalue counting function was obtained in \cite{raiwar}, \cite{meelroz}, in the constant magnetic field case. In that context the main asymptotic term is $(\ln|\ln \lambda|)^{-1}|\ln \lambda|$ which goes to infinity  faster than the one  presented here, $|\ln \lambda|^{1/2}$, implying that the accumulation of the eigenvalues is stronger in our case. Similar results to our was previously  obtained  in \cite{bmr1}, \cite{bmr2}, for other   magnetic Hamiltonians with compact supported electric potentials (see \emph{Remark} after Theorem \ref{eh}).

For non-positive potentials $V$   we could define the functions
$${\mathcal N}_0^-(\lambda):={\rm Tr}\one_{(-\infty, {\mathcal E}^-_{1}-\lambda)}(H);\quad \mathcal{N}^-_j(\lambda):={\rm Tr}\one_{\left({\mathcal E}_j^+, {\mathcal E}^-_{j+1}-\lambda\right)}(H),\quad j\in\N.$$
Although we will present our results only for ${\mathcal N}_j$, $j\in\N$,  they are still valid, with  obvious modifications, for ${\mathcal N}_j^-$, $j\in \Z_+:=\{0,1,2,...\}$. We omit these in order to simplify the presentation. Finally, for the case of $V$ without definite sign we can say: If $V$ is  power-like decaying, a refinement of the analysis used  here to obtain the effective Hamiltonian of Theorem \ref{eh}, should lead to the same type of semiclassical results of Corollary \ref{V_power} for this kind of potentials. Otherwise, if $V$   has compact support, the situation  is considerably more delicate and we do not have clear ideas of how to obtain precise results in this context. This is, to some extent, true  even if the   magnetic field is constant, where a description of the eigenvalue counting function has been obtained only for some particular classes of non-sign definite compact electric potentials (see \cite{pushroz2}, \cite{roz}).

\section{Main Results}\label{s2}\setcounter{equation}{0}

\subsection{Effective Hamiltonian}
To introduce the effective Hamiltonians that govern the main  asymptotic term of    $\mathcal{N}_j$, we need to state  more specific conditions on the magnetic field $B$ and then  recall some well-known properties of the unperturbed operator $H_0$.

Throughout this article we will assume the following:
\bel{b_bound}
  \begin{array}{llr}
  \rm{a})& B \in L^{\infty}(\R).\\
  \rm{b})& B_-\leq B(x)\leq B_{+} \,\, \mbox{a.e., for some positive constants} \,\, B_+>B_-.\vspace{.1cm} &\\
  \rm{c})& \lim_{x\to  \infty}B(x)=B_+, \quad \mbox{and} \quad \limsup_{x \to -\infty}B(x)<B_+.\end{array}
\ee

Under  condition \eqref{b_bound} the operator defined by \eqref{dj1} is essentially self-adjoint on $C_0^\infty(\R^2)$  and  its  spectrum, denoted by $\sigma(H_0)$, is purely absolutely con\-ti\-nuous \cite{iwa}, \cite{lww}. Note that the potential $b$ defined by \eqref{b_poten} is an absolutely continuous  strictly increasing function such that
\bel{dic9}
B_-|x|\leq |b(x)|\leq B_+|x|.
\ee

  Let ${\mathcal F}$ be  the partial Fourier transform
$$  (\mathcal{F}u)(x,k)=\frac{1}{(2\pi)^{1/2}}\int_{\R}e^{-iky}u(x,y)\,dy,\quad \mbox{for}\,\,u \,\in\, C_0^\infty(\R^2).$$
Then
\bel{direc} \mathcal{F} H_0\mathcal{F}^*=\int_{\R}^\oplus h(k)\,dk,\ee
where $h(k)$ is a self-adjoint operator acting in $L^2(\R)$, defined by
\bel{sep8}h(k)=-\frac{d^2}{dx^2}+(b(x)-k)^2, \quad k \,\in\, \R.\ee
For any $k \in  \R$ the spectrum of the operator $h(k)$ is discrete and simple. We denote the increasing sequence of eigenvalues by $\{E_j(k)\}_{j=1}^\infty$. For any $j \in \N$  the \emph{band function} $E_j(\cdot)$  is analytic as a function of $k \in \R$ \cite{iwa}, \cite{lww}.

Set ${\mathcal E}_j^-:=\inf_{k\in\R}E_j(k)$; ${\mathcal E}_j^+:=\sup_{k\in\R}E_j(k)$, then
\bel{oct7}\sigma(H_0)=\bigcup_{j=1}\overline{E_j(\R)}=\bigcup_{j=1}[{\mathcal E}_j^-,{\mathcal E}_j^+].\ee
  Condition \eqref{b_bound} b) implies that   $B_-(2j-1)\leq E_j(k) \leq B_+(2j-1)$ for all $k \in \R,$ and \eqref{b_bound} c) implies that $\lim_{k \to \infty} E_j(k)=B_+(2j-1)={\mathcal E}_j^+,$ for all $j \in \N$ (see \cite{iwa}).

Now we need some definitions. Put
\bel{61}
\varphi_j(x) : =  \frac{{\rm H}_{j-1}(x) e^{-x^2/2}}{(\sqrt{\pi}2^{j-1}
(j-1)!)^{1/2}}, \quad x \in \re, \quad j \in {\mathbb N},
    \ee
where
$$
{\rm H}_q(x): = (-1)^q e^{x^2} \frac{d^q}{dx^q} e^{-x^2}, \quad x
\in \re, \quad q \in {\mathbb Z}_+,
$$
are the Hermite polynomials (see e.g. \cite[Chapter I, Eqs. (8.5),
(8.7)]{bershu}). Then the real-valued function $\varphi_j$
satisfies
$$
-\varphi_j''(x) + x^2 \varphi_j(x) = (2j-1) \varphi_j(x), \quad
\|\varphi_j\|_{L^2(\re)} = 1.
$$
For $(x,\xi) \in \R^2$ define the function
    \bel{sof9}
\Psi_{j;x,\xi}(k) = B_+^{-1/4}e^{-ik\xi}\varphi_j(B_+^{1/2}x - B_+^{1/2}b^{-1}(k)), \quad j
\in {\mathbb N}, \quad k \in \re.
    \ee

    The system  $\left\{\Psi_{j;x,\xi}\right\}_{(x,\xi) \in \R^2}$ is overcomplete with respect to the measure $\frac{B_+}{2\pi} dx d\xi$ (see \cite[Subsection 5.2.3]{bershu} for the definition of an overcomplete system with respect to a given measure). Introduce the orthogonal projection
    $$
    {\mathcal P}_{j;x,\xi} : = |\Psi_{j;x,\xi} \rangle \langle \Psi_{j;x,\xi}|, \quad (x,\xi) \in \R^2,
    $$
    acting in $L^2(\re)$, and the pseudo-differential operator ${\mathcal V}_j : L^2(\re) \to L^2(\re)$ defined as the weak integral
    \bel{antiwick}
    {\mathcal V}_j : = \frac{B_+}{2\pi} \int_{\R^2} V(x,\xi) \, {\mathcal P}_{j;x,\xi}\, dx d\xi,
    \ee
    i.e.  ${\mathcal V}_j$ is an operator with  contravariant symbol $V$.

    As already mentioned, for the potential $V$ we will assume the following:
\bel{V_bound}\,\,
  \begin{array}{ll}
 \rm{a})& 0\leq V\in L^\infty(\R^2).\\
 \rm{b})& \lim\displaylimits_{x^2+y^2\to \infty}V(x,y)=0.\vspace{.1cm}
  \end{array}
\ee
The diamagnetic inequality and  Weyl's theorem imply that  $\sigma_{\rm{ess}}(H)=\sigma_{\rm{ess}}(H_0)=\sigma(H_0)$, then \eqref{spec} holds true. Conditions \eqref{V_bound} also imply that ${\mathcal V}_j$ is a non-negative and compact operator.

\begin{theorem}\label{eh}
    Assume that for some $j\in \N$, \eqref{cond1} is true. Assume also that   $B$  satisfies \eqref{b_bound}, and $V$ satisfies \eqref{V_bound}.
   Consider the band function $E_j$ as a multiplication operator in $L^2(\R)$. Then for  each $\delta \in (0,1)$
    \bel{dj5}\begin{array}{c}
   {\rm Tr}\,\one_{({\mathcal E}_j^+ + \lambda,\infty)}(E_j  + (1-\delta){\mathcal V}_j) + O_\delta (1)\\
   \leq {\mathcal N}_j(\lambda) \leq \\{\rm Tr}\,\one_{\small{({\mathcal E}_j^+ + \lambda,\infty)}}(E_j  + (1+\delta){\mathcal V}_j) + O_\delta (1), \quad \lambda \downarrow 0.
    \end{array}\ee
     \end{theorem}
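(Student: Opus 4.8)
The plan is to prove \eqref{dj5} by a Birman--Schwinger reduction that first strips off every band except the $j$-th, and then replaces the honestly reduced potential on band $j$ by the anti-Wick operator ${\mathcal V}_j$.

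\emph{Step 1: Birman--Schwinger reduction and reduction to the $j$-th band.} For $\lambda$ small the point $E:={\mathcal E}_j^++\lambda$ lies strictly inside the gap $({\mathcal E}_j^+,{\mathcal E}_{j+1}^-)$, hence in the resolvent set of $H_0$ with ${\rm dist}(E,\sigma(H_0))\geq\min\{\lambda,\,{\mathcal E}_{j+1}^--{\mathcal E}_j^+-\lambda\}$. Since $V\ge0$, every eigenvalue of $H$ in that gap emerges from the $j$-th band as the coupling is turned on, so the Birman--Schwinger principle, applied in the gap, gives
\[
{\mathcal N}_j(\lambda)=n_+\!\big(1;{\mathcal B}_j(\lambda)\big)+O(1),\qquad {\mathcal B}_j(\lambda):=V^{1/2}(E-H_0)^{-1}\one_{(-\infty,E)}(H_0)V^{1/2}\ \ge\ 0,
\]
where $n_+(s;\cdot)$ denotes the number of eigenvalues exceeding $s$, the operator ${\mathcal B}_j(\lambda)$ is compact by \eqref{V_bound} and the diamagnetic inequality, and the $O(1)$ collects the finitely many (and $\lambda$-independent) eigenvalues that have already crossed the upper edge ${\mathcal E}_{j+1}^-$ together with the usual threshold corrections. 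Writing $\one_{(-\infty,E)}(H_0)=P_j+\sum_{i<j}P_i$ with $P_i$ the $i$-th band projection, and using $(E-H_0)^{-1}P_i\le({\mathcal E}_j^+-{\mathcal E}_i^+)^{-1}P_i$, the terms with $i<j$ form a fixed non-negative compact operator whose eigenvalue counting function is $O(1)$; dropping it produces the lower bound, while peeling it off with the Weyl inequality $n_+(s_1+s_2;A+B)\le n_+(s_1;A)+n_+(s_2;B)$ produces, for any $\varepsilon>0$,
\[
{\mathcal N}_j(\lambda)=n_+\!\big(1\mp\varepsilon;\,V^{1/2}(E-H_0)^{-1}P_jV^{1/2}\big)+O_\varepsilon(1),
\]
the $-\varepsilon$ being irrelevant for the lower bound and needed only for the upper one.

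\emph{Step 2: the effective operator on the $j$-th band.} Identifying ${\rm Ran}\,P_j$ with $L^2(\R)$ through the normalized eigenfunctions $\varphi_{j,k}$ of the fibres $h(k)$, the operator ${\mathcal F}P_jH_0P_j{\mathcal F}^*$ becomes multiplication by $E_j(k)$, while ${\mathcal F}P_jVP_j{\mathcal F}^*$ becomes a non-negative compact operator $W_j$ on $L^2(\R)$. Running the Birman--Schwinger principle backwards inside ${\rm Ran}\,P_j$, together with the identity $n_+(\mu;X^*YX)=n_+(\mu;Y^{1/2}XX^*Y^{1/2})$ for $Y\ge0$, yields
\[
n_+\!\big(\mu;\,V^{1/2}(E-H_0)^{-1}P_jV^{1/2}\big)={\rm Tr}\,\one_{(E,\infty)}\!\big(E_j+\mu^{-1}W_j\big),\qquad\mu>0,
\]
which is finite since $E_j+\mu^{-1}W_j$ has essential spectrum $[{\mathcal E}_j^-,{\mathcal E}_j^+]$ and $E>{\mathcal E}_j^+$. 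Thus everything reduces to comparing $W_j$ with ${\mathcal V}_j$ in the spectral window just above ${\mathcal E}_j^+$.

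\emph{Step 3: comparison of $W_j$ with ${\mathcal V}_j$ --- the main obstacle.} This is where essentially all the work lies, and it is what makes the present situation harder than that of \cite{dhs}. Because $E_j(k)<{\mathcal E}_j^+$ for every $k$ but $E_j(k)\to{\mathcal E}_j^+$ as $k\to+\infty$, the counting functions ${\rm Tr}\,\one_{(E,\infty)}(E_j+W_j)$ and ${\rm Tr}\,\one_{(E,\infty)}(E_j+(1\pm\delta){\mathcal V}_j)$ are, up to an $O_\delta(1)$ carried by any fixed bounded interval $|k|\le k_0$ (where $E_j$ stays strictly below ${\mathcal E}_j^+$ and $W_j,{\mathcal V}_j$ are bounded, so min--max bounds that contribution by $n_+$ of a fixed compact operator at a fixed positive level), governed entirely by the half-line $k\ge k_0$. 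On that half-line hypothesis \eqref{b_bound}\,c) ($B(x)\to B_+$, equivalently $b^{-1}(k)-k/B_+\to0$) forces the fibres $h(k)$ and their $j$-th eigenfunctions $\varphi_{j,k}$ to converge, at a rate uniform in $k$, to the constant-field ($B\equiv B_+$) objects out of which \eqref{61}--\eqref{antiwick} build ${\mathcal V}_j$. The plan is to convert this convergence, together with the decay \eqref{V_bound} of $V$, the fact that $\{\Psi_{j;x,\xi}\}$ resolves the identity, and a partition of unity in the phase-space variables $(x,\xi)$ reducing matters to windows where the symbol $V$ is almost constant (so that its true and anti-Wick quantizations differ negligibly), into the two-sided quadratic-form bound
\[
(1-\delta)\,\langle u,{\mathcal V}_j u\rangle\ \le\ \langle u,W_j u\rangle\ \le\ (1+\delta)\,\langle u,{\mathcal V}_j u\rangle
\]
valid for all $u$ supported in $\{k\ge k_0(\delta)\}$. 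Feeding this into Step 2, applying it with $\delta/2$ in place of $\delta$ and choosing the auxiliary $\varepsilon$ of Step 1 so small that $(1-\varepsilon)^{-1}(1+\delta/2)\le 1+\delta$, and invoking the monotonicity $T\mapsto{\rm Tr}\,\one_{(E,\infty)}(E_j+T)$, one obtains \eqref{dj5}. The genuinely hard point --- the heart of the proof --- is the uniform-in-$k$ control of $\varphi_{j,k}$ and of the quantization error precisely as $k\to+\infty$, i.e. in the regime where the band function approaches its supremum; once that is in hand the rest is bookkeeping with Weyl inequalities.
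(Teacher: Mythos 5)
Your Steps 1 and 2 are, up to bookkeeping, the paper's own reduction: the Birman--Schwinger principle \eqref{B-S_prin}, the removal of the bands $l\neq j$ (which in the paper is done through \eqref{jan7}--\eqref{sep10}, Lemma \ref{l10} and the Weyl inequalities, leading to \eqref{1}), and the passage to a spectral-counting identity on the $j$-th band are all sound, although in the lower-bound direction you must keep the small shifts $1\pm\varepsilon$ on \emph{both} sides (the bands $l>j$ enter with the opposite sign and cannot simply be ``dropped''). The genuine gap is Step 3, which you yourself call the heart of the proof but only sketch as a plan. The two-sided multiplicative bound $(1-\delta)\langle u,{\mathcal V}_j u\rangle\le\langle u,W_ju\rangle\le(1+\delta)\langle u,{\mathcal V}_ju\rangle$ for $u$ supported in $\{k\ge k_0\}$ is not merely unproven: it is false in general under the hypotheses of the theorem. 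The true fibre eigenfunctions $\varphi_{j,k}$ of $h(k)$ and the shifted-oscillator states $\Psi_{j,\infty}(\cdot,k)$ out of which ${\mathcal V}_j$ is built have different exponential tails whenever $B\not\equiv B_+$ on the region between ${\rm supp}\,V$ and $b^{-1}(k)$ (only $B(x)\to B_+$ is assumed, possibly slowly, and $B$ may equal $B_-<B_+$ where $V$ lives). Hence for compactly supported $V$ --- exactly the situation needed for Corollary \ref{coro2} --- the ratio of the diagonal matrix elements $\langle\varphi_{j,k},V\varphi_{j,k}\rangle$ and $\langle\Psi_{j,\infty}(\cdot,k),V\Psi_{j,\infty}(\cdot,k)\rangle$ is unbounded as $k\to\infty$, and no relative bound $W_j\le(1+\delta){\mathcal V}_j$ can hold on any tail. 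Moreover, your auxiliary idea of a phase-space partition of unity ``where the symbol $V$ is almost constant, so that its true and anti-Wick quantizations differ negligibly'' uses regularity of $V$ that \eqref{V_bound} does not provide (and which the theorem deliberately avoids; smoothness only enters later, in Corollary \ref{V_power}).

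What the proof actually needs --- and what the paper supplies and you never identify --- is a quantitative comparison of the \emph{projections}, not of the reduced potentials: Theorem \ref{pr-vs-band}, i.e. $\|\pi_j(k)-\pi_{j,\infty}(k)\|=o\bigl(({\mathcal E}_j^+-E_j(k))^{1/2}\bigr)$ as $k\to\infty$, proved via the resolvent difference $\Lambda_k$ in Lemmas \ref{lambda_lemma} and \ref{l2}. Note that mere operator-norm smallness of $W_j-{\mathcal V}_j$ on a tail would also not suffice, because the error competes with the vanishing weight ${\mathcal E}_j^+-E_j(k)+\lambda$; the point of the $o(\sqrt{\;\cdot\;})$ rate is that the difference of the \emph{weighted} Birman--Schwinger operators $V^{1/2}T_j(\lambda,\tilde A)^{1/2}$ and $V^{1/2}T_{j,\infty}(\lambda,\tilde A)^{1/2}$ becomes small in norm uniformly as $\lambda\downarrow0$, so that the Ky Fan inequalities \eqref{lau13} remove it at a fixed threshold. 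In other words, the comparison is additive after weighting: the true band function $E_j$ is kept in the weight, only the projection $\pi_j(k)$ is replaced by $\pi_{j,\infty}(k)$, and then the unitary identification \eqref{nov13} shows that the reduced potential is \emph{exactly} ${\mathcal V}_j$ --- no comparison of quantizations and no $\delta$-sandwich of quadratic forms is needed. Without an argument of this strength, your Step 3 does not close, and as formulated it cannot be repaired by refining the claimed form bound.
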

\emph{Remark}: Similar results to Theorem  \ref{eh} appear in \cite{bmr1} and \cite{bmr2}.
 In  \cite{bmr1}    the discrete spectrum of  operators of the form  $H_1=H_{\rm{Hall}}+V$, is described, where
  $$H_{Hall}=H_{\rm{Landau}}+W(x),$$  $H_{\rm{Landau}}$ being the two dimensional Schr\"odinger operator with  constant magnetic field, and $W$ being a monotonic function depending only on the first variable $x$. In the same way, in \cite{bmr2} the operator   $H_2=H_{Half-Plane}+V$ is considered, where   $H_{Half-Plane}$ is the Schr\"odinger operator with constant magnetic field defined for a half-plane, with a Dirichlet boundary condition along the edge. In both articles   an eigenvalue counting function similar to \eqref{njota} is studied. The  effective Hamiltonians obtained in those articles are  particular cases of the one  given by  Theorem \ref{eh}, when $b^{-1}(k)=B_+^{-1}k$ in \eqref{sof9}. All these  three models share the particularity that the unperturbed operators $H_{\rm{Hall}}, H_{\rm{Landau}}$ and $ H_0$  admit a direct integral decomposition with  fibred operators that  converge to  shifted harmonic oscillators as $k\to \infty$. However, despite this similarity, the proof of \ref{eh} requires the use of  some new ideas and presents  technical difficulties that do not appear in  \cite{bmr1} or \cite{bmr2}.

\subsection{Asymptotic behavior of $\mathcal {N}_j(\lambda)$: Finite number of eigenvalues}
In  Corollaries \ref{coro1}, \ref{coro2} we will see that the finiteness  or the infiniteness of the  number  of eigenvalues  of $H$ in the gaps of $\sigma_{\rm {ess}}(H)$,  depend on a relation between the support of $V$ and the  number
\bel{lau24}
{\bf x}^+ : = \inf \{x \in \re \, ; \, B(t) =B_+\,\, \mbox{for almost all} \,\,t\,\, \mbox{in}\,(x,\infty) \}.
\ee
Note that it is possible to have ${\bf x}^+=\infty$.
 \begin{follow} \label{coro1}
 Suppose that  \eqref{cond1} is true, and that $B$ satisfies  \eqref{b_bound}. Assume also  that $V$ satisfies \eqref{V_bound} and   $ \|\int_{\re} V(x,y)\, dy\|_{L^\infty(\R)} < \infty$. Then, if
    \bel{sep9}
    {\bf x}^+ >\sup \left\{ x \in \re \, ; \rm{for\,\, some}\,\, y \in\R, \,\,(x,y) \in {\rm ess}\,{\rm supp}\,V \right\},
      \ee
we have that
     \bel{nov3}
    {\mathcal N}_j(\lambda) = O(1), \quad \lambda \downarrow 0.
    \ee
    \end{follow}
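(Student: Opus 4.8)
Given Theorem~\ref{eh}, the plan is to reduce the statement to a spectral estimate on the effective operator $E_j+(1+\delta){\mathcal V}_j$ acting in $L^2(\R)$, and then show that under the geometric separation~\eqref{sep9} this operator has only finitely many eigenvalues above ${\mathcal E}_j^+={\mathcal E}_j^+$ (the supremum $B_+(2j-1)$). By~\eqref{dj5} it suffices to prove ${\rm Tr}\,\one_{({\mathcal E}_j^++\lambda,\infty)}(E_j+(1+\delta){\mathcal V}_j)=O(1)$ as $\lambda\downarrow 0$ for one fixed small $\delta$; equivalently, that $E_j+(1+\delta){\mathcal V}_j-{\mathcal E}_j^+$ has finite-dimensional positive spectral subspace, i.e. that outside a fixed finite-rank perturbation one has $(1+\delta){\mathcal V}_j\le {\mathcal E}_j^+-E_j$ in the form sense. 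Since ${\mathcal E}_j^+-E_j(k)\ge 0$ and $\to 0$ only as $k\to+\infty$, the whole issue is the behaviour of the symbol of ${\mathcal V}_j$ for $k$ large, i.e. for the coherent states $\Psi_{j;x,\xi}$ centered (in the $x$-variable) near $B_+^{1/2}b^{-1}(k)$, which tends to $+\infty$.

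\medskip
\noindent\textbf{Key steps.} First I would record, from the asymptotics $E_j(k)\to B_+(2j-1)$ together with the analyticity of $E_j$ and the structure of $h(k)$, a quantitative lower bound on the gap ${\mathcal E}_j^+-E_j(k)$: using that $h(k)-(2j-1)B_+$ is, after the change of variables $x\mapsto B_+^{-1/2}x+b^{-1}(k)$, a small perturbation of the shifted harmonic oscillator whose potential error is controlled by $\sup_{t\ge b^{-1}(k)-C}|B(t)-B_+|$ (because $b(x)-k=\int_{b^{-1}(k)}^x B$ and $B_+(x-b^{-1}(k))$ differ by $\int_{b^{-1}(k)}^x (B-B_+)$), one gets ${\mathcal E}_j^+-E_j(k)\ge c\,\gamma(b^{-1}(k))$ for some modulus $\gamma$ with $\gamma(r)\to 0$ as $r\to+\infty$ but $\gamma(r)>0$ for finite $r$; in particular on any bounded $k$-interval the gap is bounded below. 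Second, I would estimate the contravariant symbol contribution: for $k$ in a region where $b^{-1}(k)$ lies to the right of ${\rm ess\,supp}\,V$ projected onto the $x$-axis, the overlap $\langle V\,\Psi_{j;x,\xi},\Psi_{j;x,\xi}\rangle$ — which is what enters the diagonal of ${\mathcal V}_j$ — is super-exponentially small, because $\varphi_j$ decays like a Gaussian and the center $B_+^{1/2}b^{-1}(k)$ is separated from $B_+^{1/2}x$ for $(x,y)\in{\rm supp}\,V$; hence, off a compact set of phase space the antinormal-ordered operator $\|\int V(x,\xi){\mathcal P}_{j;x,\xi}\,dx\,d\xi\|$ restricted to the spectral sector $\{k\ge R\}$ is dominated (Schur test in the $k$-variable, using $\|\int_\R V(x,y)\,dy\|_{L^\infty}<\infty$ to integrate out the $\xi$-dependence) by the gap $c\,\gamma(b^{-1}(k))$ once $R$ is large enough. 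Third, on the complementary region $\{k\le R\}$, ${\mathcal V}_j$ is a bounded operator while ${\mathcal E}_j^+-E_j\ge \mathrm{const}>0$, so only a finite-rank correction is needed there; combining, $E_j+(1+\delta){\mathcal V}_j-{\mathcal E}_j^+\le \Pi$ with $\Pi$ finite rank, giving the finiteness of the positive spectral subspace and hence~\eqref{nov3}.

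\medskip
\noindent\textbf{Main obstacle.} The delicate point is matching the two decay rates in the region $k\to+\infty$: I need the off-support Gaussian smallness of the coherent-state overlap to beat the vanishing of the spectral gap ${\mathcal E}_j^+-E_j(k)$, and a priori the gap could close \emph{faster} than any power but not faster than any exponential only if $B-B_+$ decays merely polynomially. The key quantitative fact that saves the argument is that ${\mathcal E}_j^+-E_j(k)$ vanishes \emph{no faster than} the tail of $B-B_+$ near $+\infty$ (roughly, $\gamma(r)\asymp \sup_{t\ge r}|B(t)-B_+|$ up to harmless factors), while the coherent-state overlap with $V$ supported to the left of ${\bf x}^+$ decays \emph{Gaussian-fast in $b^{-1}(k)$}; since $b^{-1}(k)\to\infty$, Gaussian decay dominates any modulus $\gamma$, so the comparison holds for $R$ large. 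Making this precise requires care in choosing how far ``to the right of ${\rm supp}\,V$'' the strict inequality~\eqref{sep9} places ${\bf x}^+$, so that for $k$ large the coherent state is genuinely centered beyond ${\rm ess\,supp}\,V$ by a positive margin — this is exactly where hypothesis~\eqref{sep9} is used, and it is the crux of the proof; the rest is a routine min-max/Glazman splitting and a Schur-test bound.
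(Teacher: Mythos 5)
Your overall strategy (reduce via Theorem \ref{eh}, treat bounded $k$ as an $O(1)$ contribution, and in the region $k\to\infty$ beat the closing gap ${\mathcal E}_j^+-E_j(k)$ by the Gaussian decay of the coherent states off ${\rm ess\,supp}\,V$, using $\|\int_\R V(\cdot,y)\,dy\|_{L^\infty}<\infty$ to integrate out the second variable) is structurally the paper's. But the quantitative fact you designate as the one that ``saves the argument'' is not correct, and this is a genuine gap. You claim ${\mathcal E}_j^+-E_j(k)\gtrsim \gamma(b^{-1}(k))$ with $\gamma(r)\asymp\sup_{t\ge r}|B(t)-B_+|$, and then conclude because ``Gaussian decay dominates any modulus $\gamma$.'' Under \eqref{b_bound} the point ${\bf x}^+$ of \eqref{lau24} may well be finite --- indeed the paper normalizes to ${\bf x}^+=0$, so that $B\equiv B_+$ on $(0,\infty)$ --- and then $\gamma(r)=0$ for all large $r$: your lower bound is vacuous and the inequality ``overlap $\le c\,\gamma$'' is impossible. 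In this (central) situation the gap is itself Gaussian-small in $b^{-1}(k)$, with the \emph{same} quadratic rate $B_+$ as the coherent-state overlaps, so the comparison is not ``Gaussian versus a slower modulus'' but a competition between two Gaussians of equal width, decided solely by the offset of their centers. This is exactly what the paper supplies and what your sketch lacks: choosing $\tilde{x}$ strictly between $X^+:=\sup\{x:\ (x,y)\in{\rm ess\,supp}\,V \mbox{ for some } y\}$ and ${\bf x}^+$, comparing $h(k)$ with the step-potential operator $h^W(k)$ and invoking Proposition 4.2 of \cite{bmr1} to get ${\mathcal E}_j^+-E^W_j(k)\ge C_j(B_+b^{-1}(k))^{2j-3}e^{-B_+(b^{-1}(k)-\tilde{x})^2}$, so that in the resulting integral the exponent combines to $e^{2k(x-\tilde{x})}e^{B_+(\tilde x^2-x^2)}$, negative and linear in $k$ precisely because $x\le X^+<\tilde{x}$. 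Without a gap lower bound of this tunneling type (valid when $B=B_+$ near $+\infty$, and, via the monotonicity comparison \eqref{vital}, also when ${\bf x}^+=\infty$), your domination step fails; with it, the delicate point you flag in your last paragraph is resolved, but by a mechanism different from the one you assert.

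A secondary remark: the paper does not perform a form comparison plus Glazman splitting on the effective operator; after \eqref{2} it simply bounds $n_*\bigl(r;T_{j,\infty}(\lambda,A)^{1/2}V^{1/2}\bigr)$ by the Hilbert--Schmidt (Chebyshev, $p=2$) norm \eqref{dj36}, which turns the whole problem into one explicit convergent integral, uniformly in $\lambda$. Your Schur-test variant could be made to work, but only after the correct gap estimate above is in place; as written, the proposal's key estimate is both unproven and false in the admissible case $B\equiv B_+$ on a half-line.
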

\subsection{Asymptotic behavior of $\mathcal {N}_j(\lambda)$: Infinite number of eigenvalues for $V$ of  compact support}
 Let ${\Omega} \subset \rd$ be a bounded domain. Denote by ${\bf c}_-(\Omega)$ the maximal length of the vertical segments contained in $\overline{\Omega}$. Further, let $B_R((x,y)) \subset {\R^2}$ be a disk of radius $R>0$ centered at $(x,y) \in {\R^2}$. For $a\in \R$, set
    $$
    K(\Omega,a) : = \left\{(\xi, R) \in \re \times \re_+ \, ; \,  \; \mbox{there exists} \; \eta \in \re \; \mbox{such that} \; \Omega \subset B_R((\xi+a,\eta))\right\},
    $$
    and
    $$
    {\bf c}_+(\Omega,a) := \inf_{(\xi, R) \in K(\Omega,a)} R \varkappa\left(\frac{\xi_+}{eR}\right),
    $$
    where $\xi_+ : = \max\{\xi,0\}$, and $ \varkappa(s) : = \left|\left\{ t > 0 \, ; \, t \ln{t} < s\right\}\right|$, for  $ s \in [0, \infty).$ Here  $| \cdot |$ denotes the Lebesgue measure in $\R$.

    Also define
    $$\tilde{\Omega}:=\{(x,y)\in\Omega; x>{\bf x}^+\}.$$

    \begin{follow} \label{coro2} Assume that \eqref{cond1} holds true, and  that $B$ is a function satisfying \eqref{b_bound}.
    Further assume that
    \bel{dj35}
    c_- \one_{\Omega_-}(x,y) \leq V(x,y) \leq c_+ \one_{\Omega_+}(x,y), \quad (x,y) \in {\R^2},
    \ee
    where $\Omega_{\pm} \subset {\R^2}$ are bounded domains with Lipschitz boundaries, and $0 < c_- \leq c_+ < \infty$. Then, if
    $${\bf x}^+<\sup\, \left\{x\in \R;\rm{for\,\, some}\,\, y \in\R, \,\,(x,y)\in{\Omega}_-\right\},$$
   the following asymptotic bounds
    \bel{dj25}
    {\mathcal C}_-  |\ln{\lambda}|^{1/2}(1 + o(1)) \leq {\mathcal N}_j(\lambda) \leq {\mathcal C}_+  |\ln{\lambda}|^{1/2}(1 + o(1)), \quad \lambda \downarrow 0,
    \ee
  hold true  with ${\mathcal C}_- : = (2\pi)^{-1} \sqrt{b} {\bf c}_-(\tilde{\Omega}_-)$ and ${\mathcal C}_+ : = e \sqrt{b} {\bf c}_+(\tilde{\Omega}_+,{\bf x}^+)$.
   \end{follow}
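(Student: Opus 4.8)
The plan is to derive Corollary~\ref{coro2} from Theorem~\ref{eh} by a spectral analysis of the one-dimensional effective operator $E_j-{\mathcal E}_j^+ +(1\pm\delta){\mathcal V}_j$ acting in $L^2(\R)$ (variable $k$); that is, one must understand ${\rm Tr}\,\one_{(\lambda,\infty)}\big(E_j-{\mathcal E}_j^+ +(1\pm\delta){\mathcal V}_j\big)$ as $\lambda\downarrow0$. The $O_\delta(1)$ terms and the factors $1\pm\delta$ affect only lower-order contributions, so the whole scheme is run with $\delta$ fixed and its effect collected at the very end. Using the two-sided bound \eqref{dj35}, the monotonicity of $V\mapsto{\mathcal V}_j$, and the min-max principle, I would first replace ${\mathcal V}_j$ by $c_\pm$ times the anti-Wick operator ${\mathcal T}_{\one_{\Omega_\pm}}$ with contravariant symbol $\one_{\Omega_\pm}$.

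The next step is a localization in $k$. Since $E_j(k)\to{\mathcal E}_j^+$ only as $k\to+\infty$, while $\limsup_{x\to-\infty}B<B_+$ keeps $E_j$ bounded away from ${\mathcal E}_j^+$ for $k\to-\infty$, all eigenvalues lying above ${\mathcal E}_j^+$ originate from $k\to+\infty$; moreover, for the coherent states $\Psi_{j;x,\xi}$ with $x\le{\bf x}^+$ the weight $W(k):={\mathcal E}_j^+-E_j(k)$ is bounded below on the relevant $k$-range, so by min-max the part of ${\mathcal T}_{\one_{\Omega_\pm}}$ supported over $\Omega_\pm\cap\{x\le{\bf x}^+\}$ contributes only $O(1)$. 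This is precisely how $\tilde\Omega_\pm$ enters. On the remaining range $k\ge b({\bf x}^+)$ the function $b$ is affine, hence $b^{-1}$ is affine and the $\Psi_{j;x,\xi}$ with $x>{\bf x}^+$ reduce to genuine shifted harmonic-oscillator (Landau-type) coherent states; thus ${\mathcal T}_{\one_{\Omega_\pm}}$, cut to $\tilde\Omega_\pm$, becomes an honest Toeplitz operator on the Bargmann--Fock space, and the effective operator is, modulo controlled errors, $-W(b(\cdot))+c_\pm{\mathcal T}_{\one_{\tilde\Omega_\pm}}$.

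Then one needs the asymptotics of $W$: first-order perturbation theory for $h(k)=-d^2/dx^2+(b(x)-k)^2$ relative to the harmonic oscillator $-d^2/dx^2+B_+^2(x-b^{-1}(k))^2$, together with the elementary identity $B_+^2(x-b^{-1}(k))^2-(b(x)-k)^2=\big(\int_x^{b^{-1}(k)}(B_+-B)\big)\big(B_+(b^{-1}(k)-x)+(k-b(x))\big)\ge0$ (strict on positive-measure sets accumulating at ${\bf x}^+$) and the Gaussian tails of $\varphi_j$, yields $W(k)\asymp p(k)\,\exp\!\big(-B_+(b^{-1}(k)-{\bf x}^+)^2\big)$ with $p$ subexponential. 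Armed with this, the lower bound in \eqref{dj25} follows variationally: fixing a vertical segment of length ${\bf c}_-(\tilde\Omega_-)$ inside $\overline{\tilde\Omega_-}$, located at some $x=x_*>{\bf x}^+$, one builds an almost-orthonormal family of coherent states $\Psi_{j;x,\xi}$ with $(x,\xi)$ ranging over a box $(x_*,x_*+L)\times J$, where $|J|={\bf c}_-(\tilde\Omega_-)(1-o(1))$ and $L\asymp|\ln\lambda|^{1/2}$, and checks that the quadratic form of $-W(b(\cdot))+c_-{\mathcal T}_{\one_{\tilde\Omega_-}}$ exceeds $\lambda$ on its span; the point is that along $x$ the overlap of a coherent state with $\tilde\Omega_-$ decays like $\exp(-\tfrac{B_+}{2}(x-x_*)^2)$, which, thanks to $x_*>{\bf x}^+$, is \emph{strictly slower} than $W\sim\exp(-B_+(x-{\bf x}^+)^2)$, so the form stays positive out to $x-x_*\asymp|\ln\lambda|^{1/2}$; multiplying the area of the box by the coherent-state density $B_+/(2\pi)$ gives ${\mathcal C}_-|\ln\lambda|^{1/2}(1+o(1))$. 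For the upper bound one covers $\tilde\Omega_+$ by a disk $B_R((\xi+{\bf x}^+,\eta))$ with $(\xi,R)\in K(\tilde\Omega_+,{\bf x}^+)$, uses ${\mathcal T}_{\one_{\tilde\Omega_+}}\le{\mathcal T}_{\one_{B_R((\xi+{\bf x}^+,\eta))}}$, and combines the explicit super-exponential decay of the eigenvalues of the disk Toeplitz operator with the decay of $W$; inverting the two rates and optimizing over $K(\tilde\Omega_+,{\bf x}^+)$ produces ${\mathcal C}_+|\ln\lambda|^{1/2}(1+o(1))$, with the function $\varkappa$ encoding the inverse of $n\mapsto n\ln n$ from the disk eigenvalue asymptotics.

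The main obstacle is twofold. First, one must control the errors from replacing the true fibre operators and coherent states by their harmonic-oscillator counterparts \emph{uniformly over the growing window} $b({\bf x}^+)\le k\lesssim|\ln\lambda|^{1/2}$; these errors are only mildly small and have to be shown negligible against both competing Gaussian rates. Second --- and this distinguishes the present situation from \cite{bmr1}, \cite{bmr2} --- the rate of $W$ is not explicit, being governed by the merely affine-at-infinity map $b^{-1}$ and by the set where $B=B_+$; matching it precisely, prefactors included, with the Gaussian decay of the coherent-state overlaps and with the super-exponential spectral asymptotics of the disk Toeplitz operator is what makes the sharp constants ${\mathcal C}_\pm$, rather than merely the order $|\ln\lambda|^{1/2}$, delicate.
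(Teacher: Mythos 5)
Your overall architecture (reduce to the effective operator of Theorem \ref{eh}, replace ${\mathcal V}_j$ by Toeplitz operators with symbols $c_\pm\one_{\tilde\Omega_\pm}$, control ${\mathcal E}_j^+-E_j(k)$ by a Gaussian rate tied to ${\bf x}^+$, then count) is the right shape, and your upper-bound sketch is close in spirit to what actually happens. But the paper does not run the analysis you propose: instead of two-sided perturbative asymptotics for $W(k)={\mathcal E}_j^+-E_j(k)$ and a direct coherent-state argument, it brackets $E_j(k)$ between band functions of two explicitly analyzable comparison fibres --- the shifted oscillator plus the step potential \eqref{aug25a} (giving \eqref{ine_band} and, via Proposition 4.2 of \cite{bmr1}, a Gaussian lower bound on $W$) for the upper bound, and the Neumann half-line operator (via \eqref{dic12}, \eqref{aug25c} and the asymptotics of \cite{popoff}) for the lower bound --- and then quotes wholesale the spectral asymptotics of the resulting model operators (Theorems 5.1 and 6.1 of \cite{bmr1}, Proposition 3.7 and Corollary 3.9 of \cite{bmr2}). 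This bracketing is also how the paper sidesteps the uniformity issues you flag, since under \eqref{b_bound} alone $b^{-1}$ is exactly affine on $(b({\bf x}^+),\infty)$ and the comparison band functions are explicitly known.

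The genuine gap is in your lower bound. Producing an almost-orthonormal family of coherent states over a box $(x_*,x_*+L)\times J$ on which the \emph{diagonal} of the form $-W+c_-{\mathcal T}_{\one_{\tilde\Omega_-}}$ exceeds $\lambda$ does not yield ${\rm Tr}\,\one_{(\lambda,\infty)}$ of the operator $\geq$ the cardinality of the family: the min--max principle requires the full restricted quadratic form, and here the off-diagonal Toeplitz terms are not negligible --- coherent states centered far to the right of $\tilde\Omega_-$ have Husimi restrictions to $\tilde\Omega_-$ that are nearly collinear Gaussian tails, so the smallest eigenvalue of the compressed operator is far below its smallest diagonal entry, and linear combinations in your span can have ${\mathcal T}$-energy well under $\lambda$ while $-W$ is only marginally small. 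Your own numerology shows the problem: the diagonal criterion $c_-e^{-B_+(x-x_*)^2/2}>\lambda$ gives $L\sim\sqrt{2/B_+}\,|\ln\lambda|^{1/2}$, and multiplying by the density $B_+/(2\pi)$ and $|J|={\bf c}_-$ yields $\sqrt{2}\,{\mathcal C}_-|\ln\lambda|^{1/2}$, not ${\mathcal C}_-|\ln\lambda|^{1/2}$; the constant ${\mathcal C}_-=(2\pi)^{-1}\sqrt{B_+}\,{\bf c}_-$ does not come from a phase-space area count at all, but from the Gaussian-in-$n$ eigenvalue decay of the segment-Toeplitz model operator combined with the weight, i.e., exactly the statement proved in \cite{bmr1}, \cite{bmr2} and imported by the paper. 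So the step you describe as ``checks that the quadratic form exceeds $\lambda$ on its span'' is the entire difficulty, and as formulated it is unproved (and, at your claimed density, its conclusion would overshoot the known constant). To repair the proposal you must either prove the spectral lower bound for the weighted segment-Toeplitz compression directly, or do as the paper does and reduce by operator monotonicity to the model operators of \cite{bmr1}, \cite{bmr2}.
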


\emph{Remark}:
 The constants ${\mathcal C}_\pm$ already appeared in \cite{bmr1}, \cite{bmr2}, where it is shown that  ${\mathcal C}_-<{\mathcal C_+}$.

\subsection{Asymptotic behavior of $\mathcal {N}_j(\lambda)$: Infinite number of eigenvalues for  power-like decaying $V$}

Now we will consider potentials $V$ whose support is not compact. First we will assume that there exists a positive number  $m$ such that, for any pair $(\alpha, \beta) \in \mathbb{Z}_+^2$,  there exists a positive constant $C_{\alpha,\beta}$, such that
\bel{apr14}
|\partial_x^\beta\partial_\xi^\alpha V(x,\xi)|\leq C_{\alpha, \beta}\langle x, \xi \rangle^{-m-\alpha-\beta} \quad \mbox{ for all} \,\,(x,\xi)\in \R^2,
\ee
where $\langle x, \xi \rangle =(1+x^2+\xi^2)^{1/2}$.

Moreover, let $s\in \R$ and define the volume function
\bel{apr20}
N(\lambda,V,s):=\frac{1}{2\pi}vol\{(x,\xi)\in \R^2; V(x,\xi)> \lambda, x>s\},
\ee
where $vol$ denotes  the Lebesgue measure in $\R^2$.
We will assume that  for some $s_0\in\R$ and positive constants $C$ and  $\lambda_0$
\bel{jul10a}
N(\lambda,V,s_0)\geq C \lambda^{-2/m}, \quad 0<\lambda<\lambda_0.
\ee
We say that a decreasing function $f:\R_+\to \R_+$  satisfies the homogeneity  condition if
\bel{jul10}
\lim_{\epsilon \downarrow 0}\limsup_{\lambda\downarrow 0}\lambda^{2/m}\left(f(\lambda(1-\epsilon))-f(\lambda(1+\epsilon))\right)=0.
\ee
\begin{follow}\label{V_power} Assume that \eqref{cond1} is true. Also suppose that  $B$ is a smooth function with all its derivatives  bounded and for some $M >m$
\bel{may14}
B_+-B(x)= O(\langle x\rangle^{-M}), \quad x \to \infty.
\ee
If $V$ satisfies \eqref{apr14} with $m>1$, and for  $s_0\in \R$, $N(\lambda,V,s_0)$ satisfies  \eqref{jul10a} and  \eqref{jul10},  then we have the following asymptotic formula
\bel{the4}
\mathcal{N}_j(\lambda)=B_+N(\lambda,V,s_0)(1+o(1)), \quad \lambda \downarrow 0.
\ee
\end{follow}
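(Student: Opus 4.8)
The plan is to derive Corollary \ref{V_power} from the effective Hamiltonian bounds of Theorem \ref{eh}, reducing the spectral-counting problem to a semiclassical Weyl-type estimate for the pseudo-differential operator $E_j + (1\pm\delta){\mathcal V}_j$ acting in $L^2(\R)$. First I would observe that, near the supremum ${\mathcal E}_j^+ = B_+(2j-1)$, the band function $E_j(k)$ converges to $B_+(2j-1)$ as $k\to\infty$ and, by the hypothesis \eqref{may14} together with analyticity, the difference $B_+(2j-1) - E_j(k)$ is controlled from above and below by a negative power of $k$ for large $k$; this reflects the rate at which the fibred operator $h(k)$ in \eqref{sep8} approaches the shifted harmonic oscillator. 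More precisely, one expects $0 \le {\mathcal E}_j^+ - E_j(k) \le C\langle k\rangle^{-M}$ for $k$ large (with a matching lower bound on the relevant half-line), so that in the region that matters for the counting function the multiplication operator $E_j$ is a small perturbation, of order $\lambda$-scale or smaller, of the constant ${\mathcal E}_j^+$.

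The second step is to rewrite ${\rm Tr}\,\one_{({\mathcal E}_j^+ + \lambda,\infty)}(E_j + (1\pm\delta){\mathcal V}_j)$ as a count of eigenvalues of $(1\pm\delta){\mathcal V}_j$ lying above a threshold that is essentially $\lambda$, up to the lower-order correction coming from ${\mathcal E}_j^+ - E_j$. Here I would use the fact, recorded after \eqref{antiwick}, that ${\mathcal V}_j$ is a Weyl (or anti-Wick/contravariant) quantization of $V$ precomposed with the diffeomorphism $k\mapsto b^{-1}(k)$; changing variables $\xi = b^{-1}(k)$, whose Jacobian is $1/B(b^{-1}(k)) \to 1/B_+$ at infinity, one sees that ${\mathcal V}_j$ is unitarily equivalent (modulo controllable errors) to a $\hbar$-pseudo-differential operator with symbol close to $V$ on the region $x > s_0$, and that the relevant semiclassical volume is exactly $B_+ \cdot vol\{V > \lambda,\ x > s_0\}/(2\pi)$, i.e. $B_+ N(\lambda,V,s_0)$. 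The symbol estimates \eqref{apr14} are precisely what is needed to justify pseudodifferential calculus and a sharp Weyl asymptotics with remainder, while \eqref{jul10a} guarantees the leading term genuinely diverges and \eqref{jul10} (the homogeneity/regularity condition) absorbs the $(1\pm\delta)$ and $\lambda(1\pm\epsilon)$ distortions, letting $\delta\downarrow 0$ at the end.

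Concretely, the key steps in order: (i) establish the two-sided bound on ${\mathcal E}_j^+ - E_j(k)$ for large $k$ from \eqref{may14} and analyticity, and note that for $k$ bounded (or in the half-line $x < {\bf x}^+$ region) the contribution to the trace is $O(1)$; (ii) for the upper bound in \eqref{dj5}, estimate ${\rm Tr}\,\one_{({\mathcal E}_j^+ + \lambda,\infty)}(E_j + (1+\delta){\mathcal V}_j) \le {\rm Tr}\,\one_{(\lambda,\infty)}\big((1+\delta){\mathcal V}_j\big) + O(1)$ using $E_j \le {\mathcal E}_j^+$, and symmetrically for the lower bound use $E_j(k) \ge {\mathcal E}_j^+ - C\langle k\rangle^{-M}$ together with the Birman–Schwinger/Weyl bound that the extra $\langle k\rangle^{-M}$ term with $M > m$ contributes only a lower-order count; (iii) apply the sharp semiclassical eigenvalue asymptotics for the $\hbar$-PDO with symbol $(1\pm\delta)V\circ(\text{change of variables})$ to get ${\rm Tr}\,\one_{(\lambda,\infty)}((1\pm\delta){\mathcal V}_j) = B_+ N(\lambda/(1\pm\delta),V,s_0)(1+o(1))$; (iv) invoke \eqref{jul10} and \eqref{jul10a} to replace $N(\lambda/(1\pm\delta),\cdot,\cdot)$ by $N(\lambda,\cdot,\cdot)(1+o(1))$ as $\delta\downarrow 0$, and conclude \eqref{the4} by sandwiching.

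The main obstacle I expect is step (iii) together with the change-of-variables bookkeeping in step (ii): ${\mathcal V}_j$ is not literally a Weyl quantization of $V$ but of $V$ twisted by the nonlinear map $b^{-1}$, which is only asymptotically linear (with rate governed by \eqref{may14}), so one must verify that the twisted symbol still satisfies the Hörmander-type estimates \eqref{apr14} in the semiclassically relevant region $x > s_0$ and that the error from the non-constancy of $B$ — equivalently the deviation of the Jacobian from $1/B_+$ and the deviation of $E_j$ from its asymptote — is genuinely of lower order than $\lambda^{-2/m}$. This is where the quantitative hypothesis $M > m$ in \eqref{may14} is essential: it ensures the magnetic-field correction decays faster than the potential, so it perturbs the phase-space volume only at order $o(\lambda^{-2/m})$. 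Once this is in place, the semiclassical asymptotics is standard (e.g. via a Dirichlet–Neumann bracketing on a mesh of cubes, or via coherent-state/Tauberian arguments adapted from \cite{bmr1}, \cite{bmr2}), and the homogeneity condition \eqref{jul10} does the rest.
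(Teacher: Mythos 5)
Your overall strategy coincides with the paper's: reduce via Theorem \ref{eh} (in practice via the intermediate inequalities \eqref{2}) to counting eigenvalues of the contravariant-symbol operator ${\mathcal V}_j$ above a threshold of size $\lambda$; identify ${\mathcal V}_j$, modulo lower-order remainders, with the Weyl quantization of the twisted symbol $V(b^{-1}(x),-\xi)$; apply Weyl-type spectral asymptotics for that pseudodifferential operator (the paper uses \cite{dauro}); and use \eqref{jul10a}, \eqref{jul10} together with $M>m$ to convert the resulting phase-space volume into $B_+N(\lambda,V,s_0)$ and to remove the auxiliary $\delta$'s. Your upper-bound half (using $E_j\leq {\mathcal E}_j^+$, i.e. $T_{j,\infty}(\lambda)\leq \lambda^{-1}P_{j,\infty}$) is essentially Subsection \ref{sub_upp}, up to the cutoff $\chi_\varepsilon$ isolating $x>-\varepsilon$ and the symbol-class bookkeeping of Lemma \ref{shirai}, difficulties you correctly flag.

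The genuine gap is in your lower bound, step (ii). You propose to absorb the deviation $D:={\mathcal E}_j^+-E_j(\cdot)$ through a Weyl/Birman--Schwinger-type inequality, using $D(k)=O(\langle k\rangle^{-M})$. Two problems. First, \eqref{may14} controls $B$ only as $x\to+\infty$, so $D(k)$ decays only as $k\to+\infty$; by Lemma \ref{l10} it is bounded below by a positive constant on every left half-line, and, being a multiplication operator, it is not compact in any case: $n_+(\delta'\lambda;D)=\infty$ whenever $\{D>\delta'\lambda\}$ has positive measure, so the Weyl inequality \eqref{lau11} yields nothing when one tries to split $D$ off as a separate term. (Your claimed matching lower bound $D(k)\gtrsim k^{-M}$ is also not implied by \eqref{may14} --- take $B\equiv B_+$ near $+\infty$, where the decay is exponential --- but it is not needed.) The mechanism that works, and what the paper does, is a $\lambda$-dependent truncation in the fiber variable: choose $\varrho(\lambda)=\rho_j(\delta\lambda)=O(\lambda^{-1/M})$ so that $D(k)\leq\delta\lambda$ for $k\geq\varrho(\lambda)$ (see \eqref{sep29}), compress to $k>\varrho(\lambda)$, and then prove that the discarded compression $\one_{(-\infty,\varrho]}{\mathcal V}_{\varepsilon,j}\one_{(-\infty,\varrho]}$ has counting function $o(\lambda^{-2/m})$. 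That last point is the technical heart of the lower bound (Lemma \ref{sep} together with \eqref{may11}--\eqref{oct8a}): it needs a $\lambda$-dependent symbolic cutoff $\chi_\lambda$ with seminorms uniform in $\lambda$, so that the remainder in the composition $Op^W(\widetilde{V}_\varepsilon)Op^W(\chi_\lambda)$ and the constants in the semiclassical upper bound are $\lambda$-independent, after which the volume of the strip, $O(\lambda^{-1/m}\varrho(\lambda))=O(\lambda^{-1/m-1/M})$, is $o(\lambda^{-2/m})$ precisely because $M>m$. Your closing paragraph states the right heuristic (the field correction perturbs the phase-space volume only at order $o(\lambda^{-2/m})$), but the plan as written does not contain the truncation-plus-uniform-symbol-estimate argument that turns this heuristic into an operator statement.
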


\emph {Remarks:} i) The smoothness condition on $B$ is not essential. For instance, an easy modification of the arguments permits to prove Corollary \ref{V_power} just assuming \eqref{b_bound} and ${\bf x}^+<\infty$.

 ii) Condition \eqref{apr14} implies that if $N(\lambda,V,s_0)$ satisfies \eqref{jul10} for  some $s_0\in\R$, then $N(\lambda,V,s)$  satisfies \eqref{jul10} as well, for any  $s\in\R$. Moreover, if $N(\lambda,V,s_0)$ satisfies \eqref{jul10a} then the asymptotic formula \eqref{the4} is true for any  $s\in\R$, since
  $$\lim_{\lambda \downarrow 0}\frac{N(\lambda,V,s)}{N(\lambda,V,s_0)}=1.$$

iii) Results of the same type of \eqref{the4}  were obtained  in \cite{shi1}, for non necessarily sign-definite potentials $V$, were the number $m$ in \eqref{apr14} is assumed to be $0<m<1$, and the function $B$ monotone.

 iv) As already mentioned in the   \emph{Remark}  after Theorem \ref{eh}, in \cite{bmr1}, \cite{bmr2}  the eigenvalue counting function   for magnetic Schr\"odinger operators  similar to those considered here, was studied. In \cite{bmr1}, \cite{bmr2},  the asymptotic behavior of these counting functions was  described  only for compactly supported potentials $V$, as in Corollary \ref{coro1}, and  a slightly weaker version of Corollary  \ref{coro2} was given. Since the effective Hamiltonians  obtained in \cite{bmr1}, \cite{bmr2} are examples of  the one in Theorem \ref{eh},  the conclusions of Corollary \ref{V_power} are  valid as well for the counting functions of the models considered in the articles \cite{bmr1}, \cite{bmr2}.

\section{Proof of the results} \setcounter{equation}{0}
\label{s3}
\subsection{Proof of Theorem \ref{eh}}
Before we begin the proof, let us  set some notation and auxiliary results that we will use throughout the text.  Let $r>0$ and $T=T^*$ be a linear compact operator acting in a
given Hilbert space\footnote{All Hilbert spaces in this article
are supposed to be separable.}. Set
 $$
n_{\pm}(r; T) : = {\rm Tr}\,\one_{(r,\infty)}(\pm T);
$$
thus the functions $n_{\pm}(\cdot; T)$ are respectively the
counting functions of the positive and negative eigenvalues of the
operator $T$. If $T$ is compact but not necessarily
self-adjoint (in particular, $T$ could act between two different
Hilbert spaces), we will use also the notation
$$
n_*(r; T) : = n_+(r^2; T^* T), \quad r> 0;
$$
thus  $n_{*}(\cdot; T)$ is the counting function of the singular
values of $T$. Evidently,
$$
n_*(r; T) = n_*(r; T^*), \quad n_+(r; T^*T) = n_+(r; T T^*), \quad r>0.
$$
Let us recall also the well-known Weyl inequalities
    \bel{lau11}
    n_+(r_1 + r_2; T_1 + T_2) \leq n_+(r_1; T_1) + n_+(r_2; T_2)
    \ee
    where $r_j > 0$ and $T_j$, $j=1,2$, are
linear self-adjoint compact operators (see e.g. \cite[Theorem 9.2.9]{birsol}), as well as the Ky Fan inequalities
    \bel{lau13}
    n_*(r_1 + r_2; T_1 + T_2) \leq n_*(r_1; T_1) + n_*(r_2;
T_2), \quad r_1, r_2 > 0,
    \ee
    for compact but not necessarily self-adjoint $T_j$, $j=1,2$, (see e.g. \cite[Subsection 11.1.3]{birsol}).
    Further, let $S_p$, $p \in [1,\infty)$, be the Schatten -- von Neumann class of compact operators, equipped with the norm
    $$
    \| T \|_p : = \left( -\int_0^{\infty} r^p \,dn_*(r; T) \right)^{1/p}.
    $$
    Then the  Chebyshev-type estimate
    \bel{dj36}
    n_*(r; T) \leq r^{-p} \|T\|_p^p
    \ee
    holds true for any $r > 0$ and $p \in [1, \infty)$.

\vspace{2pt}

We start the proof by   using the Birman-Schwinger principle, which give us
\bel{B-S_prin}
\mathcal{N}_j(\lambda)=n_-(1; V^{1/2}(H_0-\mathcal{E}^+_j-\lambda)^{-1}V^{1/2})+O(1), \quad \lambda \downarrow 0.
\ee
To analyze the right hand side of \eqref{B-S_prin} it is necessary to obtain further information of the operator $H_0$.

First, note that from  \eqref{direc}
\bel{nov18}(H_0-\mathcal{E}^+_j-\lambda)^{-1}=\mathcal{F}^*\int_\R^\oplus(h(k)-{\mathcal E}_j^+-\lambda )^{-1}\; dk\ \mathcal{F}.\ee
If  $\pi_j(k)$ is the orthogonal projection of $h(k)$ corresponding to the eigenvalue $E_j(k)$,  for $\lambda >0$ and $A\in[-\infty,\infty)$ set
$$ T_j(\lambda,A):=\mathcal{F}^*\int_{(A,\infty)}^\oplus(\mathcal{E}^+_j-E_j(k)+\lambda )^{-1} \pi_j(k)\;dk \,\mathcal{F}.$$

Let $l\in\N$, then the  band function $E_l(\cdot)$ has the following property:\\
Suppose that $l<j$, then for all $k\in\R$
$$
E_l(k)\leq B_+(2l-1)<B_+(2j-1)={\mathcal E}_j^+.
$$
Also,  \eqref{oct7} and  \eqref{cond1} imply that for all $l>j$
$$
E_l(k)\geq {\mathcal E}_l^- \geq {\mathcal E}_{j+1}^- > {\mathcal E}_j^+,
$$
for all $k \in \R$. Then, there exists a positive constant $\kappa$ such that for all $l\neq j$ and $k\in\R$
\bel{jan7}|E_l(k)-{\mathcal E}_j^+|>\kappa.\ee
Inequality \eqref{jan7} implies that, if $I$ is the identity operator in $L^2(\R)$,  the limit
\bel{sep10}
\lim_{\lambda\downarrow 0}(h(k)-{\mathcal E}^+_j-\lambda)^{-1}(I-\pi_j(k))
\ee
exists in the norm operator topology.

For $l=j$ we have the following result.
\begin{lemma}\label{l10}
Let  $j\in\N$, then  $E_j(k)<{\mathcal E}_j^+$ for all $k \in \R$. Moreover,  for any $A \in \R$ there exists $\alpha >0$ such that ${\mathcal E}_j^+-E_j(k)>\alpha$, for all $k<A$.
\end{lemma}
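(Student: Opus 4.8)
The plan is to establish two facts about the first-relevant band function $E_j$: that it never attains its supremum ${\mathcal E}_j^+ = B_+(2j-1)$ at any finite $k$, and that on any left half-line $(-\infty, A)$ it stays uniformly bounded away from ${\mathcal E}_j^+$. For the first claim I would argue that $E_j(k) < B_+(2j-1)$ pointwise. Recall from \eqref{sep8} that $h(k) = -\frac{d^2}{dx^2} + (b(x)-k)^2$, and compare it with the operator $-\frac{d^2}{dx^2} + B_+^2(x - b^{-1}(k))^2$, which is a shifted harmonic oscillator with frequency $B_+$ whose $j$-th eigenvalue equals exactly $B_+(2j-1)$. Since $b$ is strictly increasing with $b'(x) = B(x) \le B_+$ a.e. and, by \eqref{b_bound} c), $B(x) < B_+$ on a set of positive measure to the left of ${\bf x}^+$ (indeed $\limsup_{x\to-\infty}B(x)<B_+$ rules out $B \equiv B_+$), one has $|b(x) - k| = |b(x) - b(b^{-1}(k))| \le B_+|x - b^{-1}(k)|$ with strict inequality on a set of positive measure. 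Hence the quadratic form of $h(k)$ is strictly dominated by that of the comparison oscillator on every nonzero $\varphi$, so by the min-max principle $E_j(k) < B_+(2j-1) = {\mathcal E}_j^+$ for every $k \in \R$. (One must check the domination is strict at the level of min-max over $j$-dimensional subspaces; since the comparison operator has pure point spectrum with an orthonormal eigenbasis, testing against its first $j$ eigenfunctions already gives a strict inequality, because none of them vanishes on a set of positive measure.)

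For the second claim, the key is to control $E_j(k)$ as $k \to -\infty$. Using \eqref{dic9}, for $x$ in any bounded region and $k$ very negative we have $b(x) - k \ge b(x) + |k| \to +\infty$, so the potential $(b(x)-k)^2$ in $h(k)$ grows without bound on bounded sets; more precisely $b^{-1}(k) \to -\infty$ as $k \to -\infty$, and near $x = b^{-1}(k)$ the potential $(b(x)-k)^2$ behaves like $B(b^{-1}(k))^2 (x - b^{-1}(k))^2$ to leading order. Since $\limsup_{x\to-\infty} B(x) = B_- ' < B_+$ for some $B_-' < B_+$ (here I use \eqref{b_bound} c) directly, possibly shrinking to a slightly larger constant still below $B_+$), the local harmonic approximation near the potential well has frequency bounded above by a constant strictly less than $B_+$, which forces $\limsup_{k\to-\infty} E_j(k) \le B_-'(2j-1) < B_+(2j-1) = {\mathcal E}_j^+$. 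Combined with the already-established pointwise strict inequality $E_j(k) < {\mathcal E}_j^+$ and the continuity (indeed analyticity) of $E_j$, this yields: on $(-\infty, A)$, either $k$ lies in a fixed compact set where $\max E_j < {\mathcal E}_j^+$ by compactness, or $k$ is very negative where $E_j(k)$ is bounded away from ${\mathcal E}_j^+$ by the limsup bound; in both regimes ${\mathcal E}_j^+ - E_j(k) > \alpha$ for a suitable $\alpha > 0$.

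The main obstacle I expect is making the asymptotic statement $\limsup_{k\to-\infty}E_j(k) < {\mathcal E}_j^+$ rigorous: one cannot simply "freeze" $B$ at its limiting value near the well because $B$ need not converge as $x \to -\infty$, only $\limsup_{x\to-\infty}B(x) < B_+$ is assumed. The clean way around this is an operator comparison: fix $B_-' \in (\limsup_{x\to-\infty}B(x),\, B_+)$ and choose $R$ so that $B(x) \le B_-'$ for all $x < -R$; then for $k$ negative enough that $b^{-1}(k) < -R - 1$, build a trial function supported in $(-\infty, -R)$ (say a scaled harmonic-oscillator eigenfunction centered at $b^{-1}(k)$), on which the quadratic form of $h(k)$ is bounded by that of $-\frac{d^2}{dx^2} + B_-'^2(x-b^{-1}(k))^2$; this gives the needed upper bound $E_j(k) \le B_-'(2j-1)+o(1)$ via min-max, using $j$ such trial functions which are orthogonal. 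The rest is routine: continuity plus compactness of $[-R',A]$ for the intermediate range of $k$, and the pointwise bound handles any remaining $k$.
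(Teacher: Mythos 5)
Your plan is correct, but it follows a genuinely different route from the paper. The paper proves both assertions at once by a domination argument: since $\limsup_{x\to-\infty}B(x)<B_+$, one can choose a smooth \emph{non-decreasing} field $B_\beta\geq B$ interpolating between $\limsup_{x\to-\infty}B(x)$ and $B_+$; the comparison inequality \eqref{sep9b}--\eqref{vital} gives $E_j(k)\leq E_j(b_\beta(b^{-1}(k)),b_\beta)$, and for the monotone field the band function $E_j(\cdot,b_\beta)$ is non-decreasing (by \cite[Theorem 3.2]{manpu}) and analytic, hence strictly below its supremum ${\mathcal E}_j^+$ at every finite $k$ and, being monotone, uniformly below it on every left half-line; both claims then transfer to $E_j(k)$ through \eqref{vital}. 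You instead argue variationally and in two steps: the pointwise strict bound by testing $h(k)$ against the span of the first $j$ eigenfunctions of the frequency-$B_+$ oscillator centred at $b^{-1}(k)$ and exploiting that the potential deficit is strictly positive on a set of positive measure (your comparison is exactly \eqref{sep9b} with $B_2\equiv B_+$), and the uniformity as $k\to-\infty$ by cut-off oscillator trial functions of frequency $B'\in(\limsup_{x\to-\infty}B(x),B_+)$ centred at the well, giving $\limsup_{k\to-\infty}E_j(k)\leq B'(2j-1)<{\mathcal E}_j^+$, after which compactness and continuity handle the intermediate range. This is sound and more self-contained (no appeal to the monotonicity result of \cite{manpu}), at the price of more hands-on estimation; two points to tighten when writing it out are (i) the strictness must hold for the maximizer of the Rayleigh quotient over the whole $j$-dimensional test space, i.e.\ for every nonzero linear combination of the Hermite-type functions (true, since these are polynomials times Gaussians and vanish only at finitely many points), and (ii) the cut-off trial functions are only approximately orthonormal, so one should control the Gram matrix (the errors are exponentially small as $b^{-1}(k)\to-\infty$, which suffices for the $\limsup$ bound). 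Note also that the paper's route has the side benefit that \eqref{vital} is established once and reused elsewhere (e.g.\ in the proofs of Corollaries \ref{coro1} and \ref{V_power}), whereas your argument serves this lemma only.
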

\begin{proof}
First  let us prove that for any $k$ real, ${\mathcal E}_j^+-E_j(k)>0$. Let $B_1$ and $B_2$ be two functions satisfying condition \eqref{b_bound}, and let $b_1$, $b_2$ be the corresponding magnetic potentials as chosen in \eqref{b_poten}. Note that
$$b_s(x)-k=\int_{b_s^{-1}(k)}^{x}B_s(t)\,dt,\quad s=1,2.$$
Then it is easy to see that if $B_1(x)\leq B_2(x)$ a.e. in  $\R$,
 \bel{sep9b}(b_1(x)-k)^2\leq(b_2(x)-b_2(b^{-1}_1(k)))^2,\ee
for all $k$, and all $x$ in $\R$. For $b_1, b_2$, let  $h(k,b_1)$, $h(k,b_2)$ be the operators defined by \eqref{sep8}, and denote by $E_j(k,b_1), E_j(k,b_2)$ their associated $j$-th eigenvalues. The inequality \eqref{sep9b} implies that for all $k \in \R$
\bel{compare} h(k,b_1)\leq h(b_2(b_1^{-1}(k)),b_2),\ee
and from the  min-max principle we obtain  that for all $k \in \R$, and all $j\in \N$
\bel{vital}
E_{j}(k,b_1)\leq E_{j}(b_2(b_1^{-1}(k)),b_2).\ee

Now,  since $\limsup_{x\to -\infty}B(x)<B_+$, there exists a real number $\beta$ and a   non-decreasing smooth function $B_\beta$ such that
$${B}_\beta(x)=\left\{
  \begin{array}{cl}
    \limsup_{x \to -\infty}B(x) & \mbox{if}\, x\leq \beta \\
    B_+ & \mbox{if}\, x\geq \beta+1,\\
  \end{array}
\right.
$$
and  $ B(x)\leq{B}_\beta(x)$ a.e. in $\R$.  From the proof of \cite[Theorem 3.2]{manpu} we know that $B_\beta$  non-decreasing implies that $E_j(k,{b}_\beta)$ is a  non-decreasing function as well. Since $E_j(\cdot,{b}_\beta)$  is also analytic,  \eqref{oct7} implies that  $E_j(k,{b}_\beta)< {\mathcal E}_j^+$ for all $k\in\R$. Using  \eqref{vital} we obtain that $E_j(k)<{\mathcal E}_j^+$.

To prove the second assertion of the Lemma, just note that $E_j(\cdot,{b}_\beta)$ satisfies  the required condition and use \eqref{vital} again.
\end{proof}

Using  the Weyl inequalities \eqref{lau11} together with \eqref{nov18} and  \eqref{sep10}, and together with  Lemma \ref{l10}, it can be  easily seen  that for any $r\in (0,1)$
\bel{1}\begin{array}{cl}
  n_+(1+r;V^{1/2}T_j(\lambda,A)V^{1/2})+ O(1)   & \leq n_-(1; V^{1/2}(H_0-\mathcal{E}^+_j-\lambda)^{-1}V^{1/2}) \\
   & \leq  n_+(1-r;V^{1/2}T_j(\lambda,A)V^{1/2})+O(1),
\end{array}
\ee
as $\lambda \downarrow 0.$

Next, let $h_\infty(k)$ be the shifted harmonic oscillator $$h_\infty(k):=-\frac{d^2}{dx^2}+(B_+x-B_+b^{-1}(k))^2,$$
self-adjoint in $L^2(\R)$, for  $k\in\R$. The spectrum of $h_\infty(k)$ coincide with the set of Landau levels $\{B_+(2j-1)={\mathcal E}_j^+\}_{j=1}^\infty$. Let $\pi_{j,\infty}(k)$ be the orthogonal projection of $h_\infty(k)$ corresponding to the eigenvalue ${\mathcal E}^+_j$, which can be described explicitly by
 \bel{dic11}    \pi_{j,\infty}(k)  = |\Psi_{j,\infty}(\cdot,k) \rangle \langle \Psi_{j,\infty}(\cdot,k)|,
  \ee
 where  $\Psi_{j,\infty}(x,k)= B_+^{1/4}\varphi_j(B_+^{1/2}x - B_+^{1/2}b^{-1}(k))$ ($\varphi_j$  defined in \eqref{61}).

For $\lambda >0$ and  $A\in [-\infty,\infty)$, set
\bel{may10}T_{j,\infty}(\lambda,A):=\mathcal{F}^*\int_{(A,\infty)}^\oplus(\mathcal{E}^+_j-E_j(k)+\lambda )^{-1} \pi_{j,\infty}(k)\,dk \mathcal{F}.\ee

Our next goal is to replace $T_j(\lambda,A)$ by $T_{j,\infty}(\lambda,A)$ in  inequality \eqref{1}.

\begin{theorem}\label{pr-vs-band} For any $j \in \N$
$$\lim_{k\to \infty}\frac{||\pi_j(k)-\pi_{j,\infty}(k)||}{(\mathcal{E}^+_j-E_j(k))^{1/2}}=0.$$
\end{theorem}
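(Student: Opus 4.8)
The plan is to compare the fibered operators $h(k)$ and $h_\infty(k)$ via standard analytic perturbation theory, quantifying everything in terms of the small quantity $\mathcal{E}_j^+ - E_j(k)$ as $k\to\infty$. The starting observation is that the potential difference
\[
p_k(x):=(b(x)-k)^2-(B_+x-B_+b^{-1}(k))^2
\]
measures the discrepancy between the two fibered operators, and since $b(x)=\int_0^x B(t)\,dt$ with $B(x)\to B_+$ as $x\to\infty$ (condition \eqref{b_bound}c), one expects $h(k)$ to be a small perturbation of $h_\infty(k)$ on the region where the relevant eigenfunctions live, namely near $x=b^{-1}(k)$, which tends to $+\infty$ as $k\to\infty$ by \eqref{dic9}. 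First I would write, using the resolvent/Riesz-projection formula, both $\pi_j(k)$ and $\pi_{j,\infty}(k)$ as contour integrals of the respective resolvents around a small circle $\Gamma$ (of fixed radius $\sim\kappa/2$, with $\kappa$ as in \eqref{jan7}) enclosing only $E_j(k)$ resp.\ $\mathcal{E}_j^+$; since $|E_j(k)-\mathcal{E}_j^+|\to 0$, for $k$ large both eigenvalues are inside a common contour, and
\[
\pi_j(k)-\pi_{j,\infty}(k)=\frac{1}{2\pi i}\oint_\Gamma\Big[(h_\infty(k)-z)^{-1}\big(h_\infty(k)-h(k)\big)(h(k)-z)^{-1}\Big]dz,
\]
so that $\|\pi_j(k)-\pi_{j,\infty}(k)\|\lesssim \|(h(k)-h_\infty(k))(h_\infty(k)-z)^{-1}\|$ uniformly for $z\in\Gamma$.

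The heart of the matter is therefore to bound $\big\|\,p_k\cdot(h_\infty(k)-z)^{-1}\big\|$ by $o\big((\mathcal{E}_j^+-E_j(k))^{1/2}\big)$. I would split $p_k=p_k\one_{\{x<R_k\}}+p_k\one_{\{x\ge R_k\}}$ for a cutoff $R_k$ (say $R_k=b^{-1}(k)/2$, which still tends to $+\infty$). On $\{x\ge R_k\}$ one has $\mathrm{ess\,sup}_{x\ge R_k}|B(x)-B_+|=:\varepsilon_k\to 0$, and a direct estimate using $b(x)-k=\int_{b^{-1}(k)}^x B$ and $B_+x-B_+b^{-1}(k)=\int_{b^{-1}(k)}^x B_+$ shows $|p_k(x)|\le C\varepsilon_k\,(1+|x-b^{-1}(k)|)\,(1+x)$, which, multiplied against the Gaussian-localized resolvent $(h_\infty(k)-z)^{-1}$ — whose range consists of functions concentrated within $O(1)$ of $b^{-1}(k)$ and which maps into the form domain of the harmonic oscillator — is $O(\varepsilon_k)$ in operator norm. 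On the complementary region $\{x<R_k\}$, the factor $\one_{\{x<R_k\}}(h_\infty(k)-z)^{-1}$ is exponentially small: the eigenfunctions $\varphi_j(B_+^{1/2}(x-b^{-1}(k)))$ decay like $e^{-c B_+(x-b^{-1}(k))^2}$, and since $R_k - b^{-1}(k)\to -\infty$ linearly in $k$, this contribution is $O(e^{-c k^2})$ even after multiplying by the polynomially-bounded $p_k$. Hence $\|p_k(h_\infty(k)-z)^{-1}\|=O(\varepsilon_k)+O(e^{-ck^2})\to 0$.

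Finally I would relate this $o(1)$ bound to the required rate by bringing in the denominator $(\mathcal{E}_j^+-E_j(k))^{1/2}$. Here the key input is a \emph{lower} bound on $\mathcal{E}_j^+-E_j(k)$ that is not too small, i.e.\ a statement that the gap $B_+(2j-1)-E_j(k)$ does not decay faster than the perturbation $\varepsilon_k$ itself — in fact the first-order perturbation heuristic suggests $\mathcal{E}_j^+-E_j(k)$ is comparable to the average of $-p_k$ in the ground state of $h_\infty(k)$, up to sign, and condition \eqref{b_bound}c (global monotone-type behavior, $\limsup_{-\infty}B<B_+$) guarantees $p_k\le 0$ with a genuine bite. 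More concretely, using \eqref{vital}/\eqref{compare} and the monotone comparison field $B_\beta$ from Lemma \ref{l10}, one can squeeze $E_j(k)$ between $E_j$ of two monotone fields whose gaps are understood, yielding $\varepsilon_k^2 \lesssim \mathcal{E}_j^+-E_j(k)$, hence $\varepsilon_k/(\mathcal{E}_j^+-E_j(k))^{1/2}\to 0$; together with $\varepsilon_k/(\mathcal{E}_j^+-E_j(k))^{1/2}\le \varepsilon_k/(\varepsilon_k^{?})$ one concludes. \textbf{The main obstacle} I anticipate is precisely this last step: obtaining a clean two-sided control of $\mathcal{E}_j^+-E_j(k)$ in terms of $\varepsilon_k$, because the perturbation $p_k$ is not sign-definite globally (only near $x=b^{-1}(k)$, which is where it matters), so one must localize the first-order perturbation computation and justify that the first-order term dominates — this is where the quantitative decay hypothesis on $B_+-B(x)$ and the monotone comparison arguments of \cite{manpu} must be invoked carefully.
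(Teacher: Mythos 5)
Your reduction to estimating the Riesz-projection difference is fine as far as it goes, but the step you yourself flag as ``the main obstacle'' is a genuine gap, and the fix you sketch does not work. Your bound for the numerator is of the form $O(\varepsilon_k)+O(e^{-ck^2})$ with $\varepsilon_k=\sup_{x\geq R_k}|B(x)-B_+|$, and you then need the lower bound $\varepsilon_k^2\lesssim \mathcal{E}_j^+-E_j(k)$. Under hypothesis \eqref{b_bound} no decay rate of $B_+-B$ is assumed, and the gap $\mathcal{E}_j^+-E_j(k)$ is governed by \emph{Gaussian-weighted averages} of $B_+-B$ near $b^{-1}(k)$, not by its sup-norm on a half-line: if $B_+-B$ has, say, thin spikes, or simply decays fast (e.g.\ like $e^{-x^2}$), the gap is far smaller than $\varepsilon_k^2$, so the claimed inequality is false in general; no monotone-comparison argument can produce it because the statement itself fails. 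The exponential tail term is equally problematic: the gap itself is typically Gaussian-small in $k$ (compare $\mathcal{E}_j^+-E^{N+}_j(k)\sim C_jk^{2j-1}e^{-k^2/B_+}$, used in the proof of Corollary \ref{coro2}), so a crude cutoff at $R_k=b^{-1}(k)/2$ gives a contribution $e^{-k^2/(4B_+)}$-type, which is \emph{not} $o\bigl((\mathcal{E}_j^+-E_j(k))^{1/2}\bigr)\sim e^{-k^2/(2B_+)}$; the exponential constants would have to match exactly, which your splitting cannot deliver. In short, decoupling the numerator estimate (sup-norm of the perturbation) from the denominator (a localized, averaged quantity) loses precisely the cancellation that makes the theorem true without any rate assumption.

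The paper avoids this entirely by coupling numerator and denominator through one operator: it sets $\Lambda_k:=h(k)^{-1}-h_\infty(k)^{-1}$, notes $\Lambda_k\geq 0$ (from $B\leq B_+$ via \eqref{compare}) and only that $\|\Lambda_k\|\to 0$ (no rate, Lemma \ref{lambda_lemma}); then Lemma \ref{l2} gives $\|\pi_j(k)-\pi_{j,\infty}(k)\|\leq C_j\|\Lambda_k\pi_{j,\infty}(k)\|$ and $\mathcal{E}_j^+-E_j(k)=(\mathcal{E}_j^+)^2\|\pi_{j,\infty}(k)\Lambda_k\pi_{j,\infty}(k)\|(1+o(1))$. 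Since $\|\pi_{j,\infty}\Lambda_k\pi_{j,\infty}\|=\|\Lambda_k^{1/2}\pi_{j,\infty}\|^2$ and, by positivity, $\|\Lambda_k\pi_{j,\infty}\|\leq\|\Lambda_k\|^{1/2}\|\Lambda_k^{1/2}\pi_{j,\infty}\|$, the ratio is bounded by $C_j(\mathcal{E}_j^+)^{-1}\|\Lambda_k\|^{1/2}(1+o(1))\to 0$, with no lower bound on the gap ever needed. If you want to salvage your route, you would have to replace the sup-norm estimate of $p_k(h_\infty(k)-z)^{-1}$ by bounds against the same Gaussian-localized quantity that measures the gap — which essentially amounts to reproducing the resolvent-difference argument.
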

The proof of this Theorem follows from the next two lemmas.
\begin{lemma}\label{lambda_lemma} Define $\Lambda_k:=h(k)^{-1}-h_\infty(k)^{-1}$. Then $\Lambda_k\geq0$ and
\bel{lambda}
\lim_{k\to \infty} ||\Lambda_k||=0.
\ee
\end{lemma}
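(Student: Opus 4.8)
\textbf{Proof proposal for Lemma \ref{lambda_lemma}.} The plan is to exploit the variational (quadratic form) comparison between $h(k)$ and $h_\infty(k)$ already prepared in the proof of Lemma \ref{l10}, and to control the difference of resolvents uniformly as $k\to\infty$. First I would establish the sign: since condition \eqref{b_bound} gives $B(x)\le B_+$ a.e., the argument leading to \eqref{sep9b} (applied with $b_1=b$ the potential of $B$ and $b_2(x)=B_+x$ the potential of the constant field $B_+$, and noting $b_2(b_1^{-1}(k))=B_+b^{-1}(k)$) yields $(b(x)-k)^2\le (B_+x-B_+b^{-1}(k))^2$ for all $x,k\in\R$. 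Hence $h(k)\le h_\infty(k)$ as quadratic forms on $C_0^\infty(\R)$, both operators being bounded below by $\mathcal E_1^->0$ and invertible; therefore $h(k)^{-1}\ge h_\infty(k)^{-1}$, i.e. $\Lambda_k\ge 0$.

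For the norm estimate \eqref{lambda} I would write the resolvent identity
\[
\Lambda_k=h(k)^{-1}-h_\infty(k)^{-1}=h(k)^{-1}\bigl(h_\infty(k)-h(k)\bigr)h_\infty(k)^{-1},
\]
where $h_\infty(k)-h(k)$ is multiplication by the nonnegative function
\[
W_k(x):=\bigl(B_+x-B_+b^{-1}(k)\bigr)^2-\bigl(b(x)-k\bigr)^2.
\]
Then $\|\Lambda_k\|\le \|h(k)^{-1}\|\,\|W_k^{1/2}\|\cdot\|W_k^{1/2}h_\infty(k)^{-1}\|$ after inserting $W_k=W_k^{1/2}W_k^{1/2}$; since $\|h(k)^{-1}\|\le (\mathcal E_1^-)^{-1}$ uniformly in $k$, everything reduces to showing that $W_k^{1/2}h_\infty(k)^{-1}$ is small in norm for large $k$. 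Equivalently, by translating $x\mapsto x+b^{-1}(k)$ (which conjugates $h_\infty(k)$ to the fixed operator $-d^2/dx^2+B_+^2x^2$), it suffices to prove that the translated weight $\widetilde W_k(x):=W_k(x+b^{-1}(k))$, multiplied against the fixed resolvent $(-d^2/dx^2+B_+^2x^2)^{-1}$, has operator norm tending to $0$; and because that fixed resolvent maps $L^2$ into the form domain with the Gaussian-type localization of the harmonic oscillator, it is enough to show that $\widetilde W_k(x)$ is dominated, on the relevant scale, by a quantity that vanishes as $k\to\infty$.

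The key pointwise input is that, writing $b^{-1}(k)=:x_k$, one has $b(x+x_k)-k=\int_{x_k}^{x+x_k}B(t)\,dt$ and $B_+(x+x_k)-B_+x_k=B_+x$, so
\[
\widetilde W_k(x)=\Bigl(B_+x-\int_{x_k}^{x+x_k}B(t)\,dt\Bigr)\Bigl(B_+x+\int_{x_k}^{x+x_k}B(t)\,dt\Bigr),
\]
and the first factor equals $\int_{x_k}^{x+x_k}\bigl(B_+-B(t)\bigr)\,dt$, which is in absolute value at most $|x|\,\sup_{t\ge x_k}\bigl(B_+-B(t)\bigr)=:|x|\,\varepsilon_k$ with $\varepsilon_k\to 0$ as $k\to\infty$ by \eqref{b_bound} c) (and $x_k=b^{-1}(k)\to\infty$); the second factor is at most $2B_+|x|$ by \eqref{dic9}. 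Hence $\widetilde W_k(x)\le 2B_+\varepsilon_k\,x^2$. Therefore $\|\Lambda_k\|\le (\mathcal E_1^-)^{-1}\cdot 2B_+\varepsilon_k\cdot\bigl\|x^2\,(-d^2/dx^2+B_+^2x^2)^{-1}\bigr\|\cdot C$, and since $x^2(-d^2/dx^2+B_+^2x^2)^{-1}$ is a fixed bounded operator (indeed $B_+^2x^2\le -d^2/dx^2+B_+^2x^2$), we get $\|\Lambda_k\|=O(\varepsilon_k)\to 0$.

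The main obstacle is the second step: one must pass from the \emph{pointwise} bound $\widetilde W_k(x)\le 2B_+\varepsilon_k x^2$ to an \emph{operator-norm} bound, i.e. justify that the unbounded multiplier $\widetilde W_k$ sandwiched between resolvents is controlled by $\varepsilon_k$ times a $k$-independent constant. This is exactly the place where the harmonic-oscillator structure is essential — one cannot use $\|h(k)^{-1}\|\,\|h_\infty(k)-h(k)\|\,\|h_\infty(k)^{-1}\|$ directly since $W_k$ is unbounded — and care is needed to split $W_k$ symmetrically and to use that $B_+^2 x^2\le h_\infty(k)+(\text{shift})$ after translation so that $W_k^{1/2}h_\infty(k)^{-1}W_k^{1/2}$ (or $W_k^{1/2}h_\infty(k)^{-1/2}$) is a genuinely bounded object with norm $O(\varepsilon_k)$. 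Everything else is routine resolvent algebra and the elementary estimates \eqref{dic9} and \eqref{b_bound} c).
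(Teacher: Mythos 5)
Your overall strategy (factor the resolvent difference through the potential difference $W_k$, bound $W_k^{1/2}h(k)^{-1}$ uniformly, and show $W_k^{1/2}h_\infty(k)^{-1}\to 0$) is viable, and the positivity argument is fine; the correct factorization is of course $\|\Lambda_k\|\le\|W_k^{1/2}h(k)^{-1}\|\,\|W_k^{1/2}h_\infty(k)^{-1}\|$ rather than the literal chain you wrote with $\|W_k^{1/2}\|$, but you flag that yourself. The genuine gap is the key pointwise estimate. You claim $\bigl|\int_{x_k}^{x+x_k}(B_+-B(t))\,dt\bigr|\le|x|\sup_{t\ge x_k}(B_+-B(t))=:|x|\varepsilon_k$ for all $x$, hence $\widetilde W_k(x)\le 2B_+\varepsilon_k x^2$ globally. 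This is true only for $x\ge 0$: for $x<0$ the integration interval is $[x+x_k,x_k]$, which is not contained in $[x_k,\infty)$, and as soon as $x\le -(x_k-T)$ it reaches the region $t\le T$, where hypothesis \eqref{b_bound} only guarantees $\limsup_{t\to-\infty}B(t)<B_+$ — the field need not approach $B_+$ at $-\infty$ at all, so $B_+-B(t)$ is of order one there. On that left region $\widetilde W_k(x)$ is genuinely comparable to $x^2$ with a $k$-independent constant, so the asserted bound, and with it your final display $\|\Lambda_k\|=O(\varepsilon_k)$, does not follow. Nor can one dismiss that region crudely: $x^2(-d^2/dx^2+B_+^2x^2)^{-1}$ is bounded but not compact, and restricting the multiplier $x^2$ to $\{|x|\ge R\}$ does not make its norm small (test against high Hermite functions, whose mass concentrates near the turning points), so the escaping bad region must be handled by trading a power of $|x|$, not by localization alone.

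The gap is repairable within your scheme: fix $T$ with $\varepsilon_T:=\sup_{t\ge T}(B_+-B(t))$ small, use your estimate only on $\{x\ge T-b^{-1}(k)\}$ (there the integration range lies in $[T,\infty)$, giving $\widetilde W_k\le 2B_+\varepsilon_T x^2$), and on the complement use $\widetilde W_k(x)\le(B_+x)^2$ together with $|x|\ge b^{-1}(k)-T$, so that $\|\one_{\{x\le T-b^{-1}(k)\}}\widetilde W_k^{1/2}\tilde{h}_\infty^{-1}\|\le B_+\,(b^{-1}(k)-T)^{-1}\|x^2\tilde{h}_\infty^{-1}\|\to 0$; letting $k\to\infty$ and then $T\to\infty$ yields \eqref{lambda}. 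The paper sidesteps this uniformity issue altogether: it uses the second-order resolvent identity \eqref{res-id}, proves only pointwise smallness of $d_k$ via \eqref{sep11}, upgrades it to strong convergence of $d_k\tilde{h}_\infty^{-1}$ through the $D(x^2)$ tail bound \eqref{nov3b}, converts strong into norm convergence of $\tilde{h}_\infty^{-1}d_k\tilde{h}_\infty^{-1}$ by compactness of $\tilde{h}_\infty^{-1}$, and controls the second-order term by the uniform bound on $d_k\tilde{h}(k)^{-1}$ coming from \cite[Theorem 1]{evgi}. Your route, once patched as above, is a legitimate and somewhat more quantitative alternative, but as written the decisive estimate fails exactly where the hypothesis on $B$ is weakest.
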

\begin{proof} To see that $\Lambda_k\geq 0$, use  \eqref{b_bound} b) and \eqref{compare}.
To prove \eqref{lambda} we  introduce the unitary operators $U_k:L^2(\R)\to L^2(\R)$ defined for any $k\in\R$ by
$$(U_kf)(x)=f(x+b^{-1}(k)),$$
and set
$$\tilde{h}(k):=U_kh(k)U_k^*=-\frac{d^2}{dx^2}+(b(x+b^{-1}(k))-k)^2$$
and
$$\tilde{h}_\infty:=U_kh_\infty(k)U_k^*=-\frac{d^2}{dx^2}+(B_+x)^2.$$
Instead of \eqref{lambda}  we will prove the equivalent statement $\lim_{k\to \infty} ||\tilde{h}(k)^{-1}-\tilde{h}_\infty^{-1}||=0$.

Put $d_k(x):=\tilde{h}_\infty-\tilde{h}(k)=(B_+x)^2-(b(x+b^{-1}(k))-k)^2$. Using two times the resolvent identity we get
\bel{res-id}
\tilde{h}(k)^{-1}-\tilde{h}_\infty^{-1}=\tilde{h}_\infty^{-1}d_k\tilde{h}_\infty^{-1}+\tilde{h}(k)^{-1}d_k\tilde{h}_\infty^{-1}d_k\tilde{h}_\infty^{-1}.
\ee
We need to prove first that  $\tilde{h}_\infty^{-1}d_k\tilde{h}_\infty^{-1}$ converges to zero in norm  as $k\to\infty$.

Note that
\begingroup
\renewcommand*{\arraystretch}{2}
\bel{sep11}\begin{array}{cl}
  |d_k(x)| &= {\displaystyle \left|B_+x+(b(x+b^{-1}(k))-k)\right|\left|B_+x-(b(x+b^{-1}(k))-k)\right|} \\
   & ={\displaystyle \left|\int_{b^{-1}(k)}^{b^{-1}(k)+x}B_++B(t)\,\,dt\right|\left|\int_{b^{-1}(k)}^{b^{-1}(k)+x}B_+-B(t)\,\,dt\right|} \\
   & {\displaystyle \leq 2B_+|x|\left|\int_{b^{-1}(k)}^{b^{-1}(k)+x}B_+-B(t)\,dt\right|.}
\end{array}
\ee
\endgroup
Then, since   $\lim_{x\to \infty} B(x)=B_+$, and from \eqref{dic9} $b^{-1}(k)\to \infty$ for $k\to \infty$, the function  $|d_k(x)|$ converges pointwise to zero when $k\to \infty$.

Denote by  $D(\tilde{h}(k))$, $D(\tilde{h}_\infty)$  the domains of $\tilde{h}(k)$ and $\tilde{h}_\infty$, respectively.  Using \eqref{sep9b}, $B_-\leq B(x)\leq B_+$ implies that  \bel{sep11a}
 \left(B_-x\right)^2\leq \left(b(x+b^{-1}(k))-k\right)^2\leq\left(B_+x\right)^2, \quad \mbox{for all}\,\,x\in\R. \ee
 Then the domains are equal and  coincide with the domain of the harmonic oscillator, i.e., $D(\tilde{h}(k))=D(\tilde{h}_\infty)=D(-d^2/dx^2)\cap D(x^2)$ \cite[Theorem 1]{evgi}.

Let  $f\in L^2(\R)$. Since   $\tilde{h}_\infty^{-1}f\in D(x^2)$,    for any $\epsilon >0$ one can find $N>0$ (independent of $k$) such that
\bel{nov3b}\int\displaylimits_{|x|>N}|(b(x+b^{-1}(k))-k)^2(\tilde{h}^{-1}_\infty f)(x)|^2\,dx\leq \int\displaylimits_{|x|>N}|(B_+x)^2(\tilde{h}^{-1}_\infty f)(x)|^2\,dx<\epsilon.\ee
Further,  $\tilde{h}^{-1}_\infty f$ is also  continuous, then
\bel{nov3c}\int_{-N}^N\left|d_k(x)(\tilde{h}^{-1}_\infty f)(x)\right|^2\,dx\leq \sup_{x\in [-N,N]}\left|(\tilde{h}_\infty^{-1} f)(x)\right|^2\int_{-N}^N\left|d_k(x)\right|^2\,dx.\ee
Using \eqref{nov3b}, \eqref{nov3c} and \eqref{sep11} we can conclude that $d_k\tilde{h}^{-1}_\infty$ converges strongly  to zero as $k\to \infty$. Consequently, the family $d_k\tilde{h}^{-1}_\infty$ is uniformly bounded with respect to $k$, and since $\tilde{h}^{-1}_\infty$ is compact we  get   $\|\tilde{h}^{-1}_\infty d_k\tilde{h}^{-1}_\infty\|\to 0$ for $k\to\infty$.

To finish the proof of the Lemma it only remains to show that for all $G \in D(\tilde{h}(k))=D(\tilde{h}_\infty)$, $\|\tilde{h}(k)^{-1}d_k G \|_{L^2(\R)}\leq C \|G\|_{L^2(\R)}$, for some constant  $C$ independent of $k$ and $G$.

From \cite[Theorem 1]{evgi} we know that for any $g \in D(\tilde{h}(k))$
\bel{eve} \frac{1}{2}||(b(x+b^{-1}(k))-k)^2g||_{L^2(\R)}^2\leq ||\tilde{h}(k)g||_{L^2(\R)}^2.\ee
Then for $f$  in $L^2(\R)$,  if $g=\tilde{h}(k)^{-1}f$ in \eqref{eve}
$$\frac{1}{2}||(b(x+b^{-1}(k))-k)^2\tilde{h}(k)^{-1}f||_{L^2(\R)}^2\leq ||f||_{L^2(\R)}^2.$$
Besides, using \eqref{sep11a} we get $||(B_+x)^2\tilde{h}(k)^{-1}f||_{L^2(\R)}^2\leq 2 B_+^2/B_-^2||f||_{L^2(\R)}^2,$  which implies the existence of a  uniform bound for $d_k\tilde{h}(k)^{-1}$, from where we can easily get  the needed result for $\tilde{h}(k)^{-1}d_k$.
\end{proof}
\begin{lemma}\label{l2}
Let $\Lambda_k$ be defined as in Lemma \ref{lambda_lemma}. For all $j\in\N$:
\begin{enumerate}
\item There exist a constant $C_j$ such that for all  $k$ big enough
$$||\pi_j(k)-\pi_{j,\infty}(k)||\leq C_j ||\Lambda_k \pi_{j,\infty}(k)||.$$

\item It is satisfied the  asymptotic formula \bel{asymb_band} \mathcal{E}^+_j-E_j(k)={\mathcal{E}^+_j}^2||\pi_{j,\infty}(k)\Lambda_k\pi_{j,\infty}(k)|| (1+o(1)), \quad k \to \infty.
\ee
\end{enumerate}
\end{lemma}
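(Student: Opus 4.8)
The plan is to read both statements as first-order perturbation theory for the rank one, isolated eigenvalue $({\mathcal E}_j^+)^{-1}$ of the bounded operator $h_\infty(k)^{-1}$, perturbed by $\Lambda_k=h(k)^{-1}-h_\infty(k)^{-1}$; the inputs I take from Lemma~\ref{lambda_lemma} are $\Lambda_k\ge0$ and $\|\Lambda_k\|\to0$. Write $\pi_j(k)=|\phi_j(k)\rangle\langle\phi_j(k)|$ with $h(k)\phi_j(k)=E_j(k)\phi_j(k)$, $\|\phi_j(k)\|=1$, and put $\psi_k:=\Psi_{j,\infty}(\cdot,k)$, so that $\pi_{j,\infty}(k)=|\psi_k\rangle\langle\psi_k|$ and $h_\infty(k)^{-1}\psi_k=({\mathcal E}_j^+)^{-1}\psi_k$. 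The only elementary fact I use repeatedly is that for unit vectors $u,v$ and the rank one projections $P=|u\rangle\langle u|$, $Q=|v\rangle\langle v|$ one has $\|P-Q\|=\|(I-P)v\|=\|(I-Q)u\|=(1-|\langle u,v\rangle|^2)^{1/2}$.

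A preliminary step is a \emph{uniform in $k$} spectral gap: there is $\rho_0>0$ with $|E_l(k)^{-1}-({\mathcal E}_j^+)^{-1}|\ge\rho_0$ for every $l\ne j$ and $k\in\R$. For $l<j$ this follows from $B_-\le E_l(k)\le B_+(2j-3)<{\mathcal E}_j^+$, so $E_l(k)^{-1}$ lies in a fixed compact subset of $(({\mathcal E}_j^+)^{-1},\infty)$; for $l>j$ the ordering of the band functions together with \eqref{cond1} gives $E_l(k)\ge E_{j+1}(k)\ge{\mathcal E}_{j+1}^->{\mathcal E}_j^+$, hence $E_l(k)^{-1}\le({\mathcal E}_{j+1}^-)^{-1}<({\mathcal E}_j^+)^{-1}$. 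With this in hand, for the first assertion I start from $h(k)^{-1}=h_\infty(k)^{-1}+\Lambda_k$, which yields the identity $(h(k)^{-1}-({\mathcal E}_j^+)^{-1})\psi_k=\Lambda_k\psi_k$; applying $I-\pi_j(k)$ (which commutes with $h(k)^{-1}$) and inverting $h(k)^{-1}-({\mathcal E}_j^+)^{-1}$ on $\mathrm{ran}(I-\pi_j(k))$, where the inverse has norm $\le\rho_0^{-1}$, gives $\|(I-\pi_j(k))\psi_k\|\le\rho_0^{-1}\|(I-\pi_j(k))\Lambda_k\psi_k\|\le\rho_0^{-1}\|\Lambda_k\psi_k\|$. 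Since $\psi_k$ spans $\mathrm{ran}\,\pi_{j,\infty}(k)$ we have $\|\Lambda_k\psi_k\|=\|\Lambda_k\pi_{j,\infty}(k)\|$, and the elementary fact turns the left side into $\|\pi_j(k)-\pi_{j,\infty}(k)\|$; so the first assertion holds with $C_j=\rho_0^{-1}$ (indeed for all $k$).

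For the second assertion I pair $h(k)^{-1}\phi_j(k)=E_j(k)^{-1}\phi_j(k)$ with $\psi_k$ and use self-adjointness of $h_\infty(k)^{-1}$ to obtain the exact relation $\big(E_j(k)^{-1}-({\mathcal E}_j^+)^{-1}\big)\langle\psi_k,\phi_j(k)\rangle=\langle\psi_k,\Lambda_k\phi_j(k)\rangle$. Splitting $\phi_j(k)$ through $\pi_{j,\infty}(k)$ on the right gives $\langle\psi_k,\Lambda_k\phi_j(k)\rangle=\langle\psi_k,\phi_j(k)\rangle\,\nu_k+\langle\Lambda_k\psi_k,(I-\pi_{j,\infty}(k))\phi_j(k)\rangle$, where $\nu_k:=\langle\psi_k,\Lambda_k\psi_k\rangle=\|\pi_{j,\infty}(k)\Lambda_k\pi_{j,\infty}(k)\|\ge0$. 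The error term is bounded by $\|\Lambda_k\psi_k\|\,\|(I-\pi_{j,\infty}(k))\phi_j(k)\|$; here I invoke the first assertion, $\|(I-\pi_{j,\infty}(k))\phi_j(k)\|=\|\pi_j(k)-\pi_{j,\infty}(k)\|\le C_j\|\Lambda_k\psi_k\|$, and the positivity estimate $\|\Lambda_k\psi_k\|^2=\langle\psi_k,\Lambda_k^2\psi_k\rangle\le\|\Lambda_k\|\,\nu_k$, so the error is $O(\|\Lambda_k\|\,\nu_k)$. Fixing the phase of $\phi_j(k)$ so that $\langle\psi_k,\phi_j(k)\rangle>0$ --- which tends to $1$, because $\|\pi_j(k)-\pi_{j,\infty}(k)\|\to0$ by the first assertion and Lemma~\ref{lambda_lemma} --- and dividing, I get $E_j(k)^{-1}-({\mathcal E}_j^+)^{-1}=\nu_k(1+o(1))$; multiplying by $E_j(k)\,{\mathcal E}_j^+\to({\mathcal E}_j^+)^2$ produces \eqref{asymb_band}. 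Finally $\nu_k>0$ for each $k$, so \eqref{asymb_band} is not vacuous: $\nu_k=0$ would force $\Lambda_k\psi_k=0$, i.e.\ ${\mathcal E}_j^+\in\sigma(h(k))$, which contradicts Lemma~\ref{l10} and \eqref{jan7}.

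The one genuinely delicate point is obtaining the first assertion in the sharp, ``directional'' form, with $\|\Lambda_k\pi_{j,\infty}(k)\|$ on the right rather than the crude $\|\Lambda_k\|$. It is exactly this, used together with $\|\Lambda_k\psi_k\|\le\|\Lambda_k\|^{1/2}\nu_k^{1/2}$ (which needs $\Lambda_k\ge0$), that makes the first-order error in the second assertion $o(\nu_k)$, hence negligible against the main term; a bound merely by $\|\Lambda_k\|$ would leave an error of order $\|\Lambda_k\|^{3/2}\nu_k^{1/2}$, not controlled by $\nu_k$, and the constant in \eqref{asymb_band} would be lost. Everything else --- the commutation with $I-\pi_j(k)$, the inversion on the complementary subspace, the uniform gap $\rho_0$ --- is routine once that gap has been recorded.
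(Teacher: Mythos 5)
Your proof is correct. The paper itself gives no argument for Lemma \ref{l2}: it simply defers to Propositions 3.6 and 3.7 of \cite{bmr2}, which rest on the same strategy you use, namely first-order perturbation theory for the isolated simple eigenvalue $({\mathcal E}_j^+)^{-1}$ of $h_\infty(k)^{-1}$ under the nonnegative perturbation $\Lambda_k$ with $\|\Lambda_k\|\to 0$ (Lemma \ref{lambda_lemma}). What your version buys is a self-contained and rather elementary implementation: the rank-one identity $\|P-Q\|=\|(I-P)v\|=\|(I-Q)u\|$ replaces the machinery cited from \cite{bmr2}, and you isolate precisely the two ingredients that make the error in \eqref{asymb_band} of size $o(\nu_k)$, namely the directional bound of part 1 and the estimate $\|\Lambda_k\psi_k\|^2\le\|\Lambda_k\|\,\nu_k$, which indeed requires $\Lambda_k\ge 0$. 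Two minor expository points. First, when you invert $h(k)^{-1}-({\mathcal E}_j^+)^{-1}$ on ${\rm ran}\,(I-\pi_j(k))$, the spectrum of that restriction is $\{E_l(k)^{-1}\}_{l\ne j}\cup\{0\}$, since $h(k)^{-1}$ is compact; this is harmless because any admissible $\rho_0$ is automatically smaller than $({\mathcal E}_j^+)^{-1}$ (look at $l>j$), so the bound $\rho_0^{-1}$ and your choice of $C_j$ survive, but the point deserves a sentence. Second, your uniform gap for $l>j$ invokes \eqref{cond1}; this is consistent with the paper, where the same hypothesis produces \eqref{jan7} and the lemma is only used inside the proof of Theorem \ref{eh}, but strictly speaking the blanket ``for all $j\in\N$'' in the statement should be read with that hypothesis in force.
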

\begin{proof}
 The   proof of this Lemma uses Lemma \ref{lambda_lemma}   repeating almost word by word the proof of Propositions 3.6 and 3.7 in \cite{bmr2}.
\end{proof}
Putting together Lemmas \ref{l10}, \ref{lambda_lemma} and \ref{l2} we can proof  Theorem \ref{pr-vs-band} just by  noticing that
 $$
\frac{||{\pi}_j(k)-{\pi}_{j,\infty}(k)||}{(E_j(k)-\mathcal{E}^+_j)^{1/2}} \leq
 \frac{C_j}{{\mathcal E}^+_j} \frac{\|\Lambda_k{\pi}_{j,\infty}(k) \|}{\|\Lambda_k^{1/2}{\pi}_{j,\infty}(k) \|} (1 + o(1)) \leq \frac{C_j}{{\mathcal E}_j^+} \|\Lambda_k\|^{1/2} (1 + o(1)), \quad k \to \infty.
    $$
\begin{proposition}
For all $A\in[-\infty,\infty)$, $r\in\R$, $\delta\in(0,1)$ and $j\in\N$
\bel{2a}\begin{array}{ll}
   & n_+(r(1+\delta);V^{1/2}T_{j,\infty}(\lambda,A)V^{1/2})+O(1) \\
  \leq & n_+(r;V^{1/2}T_{j}(\lambda,A)V^{1/2}) \\
  \leq & n_+(r(1-\delta);V^{1/2}T_{j,\infty}(\lambda,A)V^{1/2})+O(1), \quad \lambda\downarrow 0.
\end{array}
\ee
\end{proposition}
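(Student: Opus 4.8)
The plan is to compare the two weighted resolvents $V^{1/2}T_j(\lambda,A)V^{1/2}$ and $V^{1/2}T_{j,\infty}(\lambda,A)V^{1/2}$ by estimating the singular values of their difference, and then to feed this into the Weyl and Ky Fan inequalities in the standard ``$\pm\delta$'' fashion. Concretely, write
\[
T_j(\lambda,A)-T_{j,\infty}(\lambda,A)=\mathcal{F}^*\int_{(A,\infty)}^\oplus(\mathcal{E}^+_j-E_j(k)+\lambda)^{-1}\bigl(\pi_j(k)-\pi_{j,\infty}(k)\bigr)\,dk\,\mathcal{F},
\]
so that, since $\lambda>0$ and $\mathcal{E}^+_j-E_j(k)>0$ for all $k$ by Lemma \ref{l10}, the fibrewise operator norm of the integrand is bounded by
\[
\frac{\|\pi_j(k)-\pi_{j,\infty}(k)\|}{\mathcal{E}^+_j-E_j(k)}\le \frac{1}{(\mathcal{E}^+_j-E_j(k))^{1/2}}\cdot\frac{\|\pi_j(k)-\pi_{j,\infty}(k)\|}{(\mathcal{E}^+_j-E_j(k))^{1/2}},
\]
and by Theorem \ref{pr-vs-band} the last factor tends to $0$ as $k\to\infty$. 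The idea is that $V^{1/2}$ on both sides supplies decay in $x$, and via the explicit Gaussian form of $\pi_{j,\infty}(k)$ in \eqref{dic11} together with $b^{-1}(k)\to\infty$, this forces the relevant operator into a trace-class (indeed $S_p$ for small $p$) regime where the $(\mathcal{E}^+_j-E_j(k))^{-1}$ singularity near $k=\infty$ is tamed.

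The key steps, in order, would be: first, fix $A$ and split $(A,\infty)$ into a bounded piece $(A,A')$ and a tail $(A',\infty)$. On the bounded piece, $(\mathcal{E}^+_j-E_j(k)+\lambda)^{-1}$ is uniformly bounded (Lemma \ref{l10} again, in the form ${\mathcal E}^+_j-E_j(k)>\alpha$ is not needed here, but boundedness of $E_j$ and positivity suffice together with compactness of the $k$-interval), so the corresponding contribution to $V^{1/2}(T_j-T_{j,\infty})V^{1/2}$ is a fixed compact operator of $O(1)$ rank contribution after cutting off small eigenvalues — this is absorbed into the $O(1)$ terms. Second, on the tail $(A',\infty)$, use Theorem \ref{pr-vs-band}: given $\varepsilon>0$ choose $A'$ so large that $\|\pi_j(k)-\pi_{j,\infty}(k)\|\le \varepsilon(\mathcal{E}^+_j-E_j(k))^{1/2}$ for $k>A'$. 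Then one estimates, for $r_1+r_2=r$,
\[
n_+(r;V^{1/2}T_jV^{1/2})\le n_+(r_1;V^{1/2}T_{j,\infty}V^{1/2})+n_*(r_2/2;V^{1/2}(T_j-T_{j,\infty})V^{1/2}),
\]
using \eqref{lau11}/\eqref{lau13}, and one must show the last term is $O(1)$ as $\lambda\downarrow 0$ once we also restrict the tail appropriately. The way to get this $O(1)$ is to bound $\|V^{1/2}(T_j-T_{j,\infty})V^{1/2}\|$ by $C\varepsilon$ times something comparable to $\|V^{1/2}T_{j,\infty}V^{1/2}\|$ times a suitable spectral-scaling factor; more precisely, one shows that the difference operator restricted to the tail is, up to a factor $O(\varepsilon)$, dominated by $V^{1/2}T_{j,\infty}(\lambda,A')V^{1/2}$ itself in the sense of singular values, so that $n_*(r_2/2;V^{1/2}(T_j-T_{j,\infty})V^{1/2})\le n_+(cr_2/\varepsilon;V^{1/2}T_{j,\infty}(\lambda,A')V^{1/2})$ — which is itself $O(1)$ in $\lambda$ only after one observes that the $\lambda$-dependence of $T_{j,\infty}$ at fixed cutoff $A'$ is innocuous (here one uses Lemma \ref{l10}'s second assertion guaranteeing $\mathcal{E}^+_j-E_j(k)>\alpha>0$ away from $k=\infty$, plus monotone control of the tail). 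Choosing $\varepsilon$ in terms of $\delta$ and $r$ then yields the two-sided bound \eqref{2a} with the $(1\pm\delta)$ spreading.

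The main obstacle I anticipate is making rigorous the claim that the tail contribution of $V^{1/2}(T_j-T_{j,\infty})V^{1/2}$ is genuinely $O(1)$ uniformly in $\lambda$: the factor $(\mathcal{E}^+_j-E_j(k)+\lambda)^{-1}$ blows up as $k\to\infty$ and $\lambda\downarrow 0$, exactly in the regime where $\pi_j(k)-\pi_{j,\infty}(k)$ is small, so one is forced to exploit the \emph{relative} smallness quantified by Theorem \ref{pr-vs-band} — the ratio $\|\pi_j(k)-\pi_{j,\infty}(k)\|/(\mathcal{E}^+_j-E_j(k))^{1/2}\to 0$ — rather than any absolute bound. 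The clean way to do this is to factor the difference operator as $V^{1/2}T_{j,\infty}^{1/2}\cdot (\text{bounded, small})\cdot T_{j,\infty}^{1/2}V^{1/2}$-type sandwich, where the middle factor has norm $O(\varepsilon)$ by the theorem, so that its singular-value counting function is controlled by that of $V^{1/2}T_{j,\infty}V^{1/2}$ with a rescaled argument; the subtlety is that $T_j$ and $T_{j,\infty}$ do not share eigenprojections for finite $k$, so this factorization has to be done fibrewise in $k$ and then reassembled, using that $\pi_j(k)$ and $\pi_{j,\infty}(k)$ are close (Lemma \ref{l2}(1)) to express $T_j(\lambda,A')$ in terms of $T_{j,\infty}(\lambda,A')$ plus a controllably small perturbation. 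This is precisely the ``new technical difficulty'' alluded to in the Remark after Theorem \ref{eh}, and it is where the bulk of the work will lie; the remaining passage from \eqref{2a} to the effective Hamiltonian statement \eqref{dj5} is then routine via the Birman--Schwinger identity \eqref{B-S_prin}, \eqref{1}, and the anti-Wick calculus identifying $V^{1/2}T_{j,\infty}(\lambda,-\infty)V^{1/2}$ with the operator built from $E_j+\mathcal{V}_j$.
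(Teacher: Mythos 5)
Your overall skeleton (split $(A,\infty)$ into a bounded piece and a tail, use Weyl/Ky Fan, and invoke Theorem \ref{pr-vs-band} on the tail) is the right one, but the step on which everything hinges is not closed, and the way you propose to close it fails. You keep the full weight $(\mathcal{E}_j^+-E_j(k)+\lambda)^{-1}$ in the difference $T_j-T_{j,\infty}$ and want to dominate its tail contribution, up to a factor $O(\varepsilon)$, by an operator of the type $V^{1/2}T_{j,\infty}(\lambda,A')V^{1/2}$, concluding that $n_*(r_2/2;\cdot)\leq n_+(cr_2/\varepsilon;V^{1/2}T_{j,\infty}(\lambda,A')V^{1/2})=O(1)$. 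That last equality is false: for any fixed positive threshold, $n_+(\cdot\,;V^{1/2}T_{j,\infty}(\lambda,A')V^{1/2})$ diverges as $\lambda\downarrow 0$ — it is exactly the quantity whose divergence (of order $|\ln\lambda|^{1/2}$ or $\lambda^{-2/m}$ in the Corollaries) the whole paper is computing — so such a bound can never produce the $O(1)$ error demanded in \eqref{2a}. Moreover, the second assertion of Lemma \ref{l10} that you invoke gives the uniform gap $\mathcal{E}_j^+-E_j(k)>\alpha$ only for $k<A$, i.e.\ on the bounded piece; on the tail $(A',\infty)$ one has $E_j(k)\uparrow\mathcal{E}_j^+$, so the $\lambda$-dependence there is anything but innocuous. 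Finally, the ``clean'' sandwich $V^{1/2}T_{j,\infty}^{1/2}\cdot(\text{small})\cdot T_{j,\infty}^{1/2}V^{1/2}$ cannot be realized fibrewise: it forces the perturbation into the range of the rank-one projection $\pi_{j,\infty}(k)$, which does not contain the range of $\pi_j(k)-\pi_{j,\infty}(k)$.

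The missing idea — and the reason Theorem \ref{pr-vs-band} is stated with the half power in the denominator — is to compare \emph{square roots} rather than the operators themselves. Since $n_+(r;V^{1/2}T_j(\lambda,A)V^{1/2})=n_*(r^{1/2};V^{1/2}T_j(\lambda,A)^{1/2})$, and since $\pi_j(k)$, $\pi_{j,\infty}(k)$ are projections, the operators $T_j(\lambda,\tilde A)^{1/2}$ and $T_{j,\infty}(\lambda,\tilde A)^{1/2}$ have fibers $(\mathcal{E}_j^+-E_j(k)+\lambda)^{-1/2}\pi_j(k)$ and $(\mathcal{E}_j^+-E_j(k)+\lambda)^{-1/2}\pi_{j,\infty}(k)$, so the fiber norm of their difference is at most $\|\pi_j(k)-\pi_{j,\infty}(k)\|/(\mathcal{E}_j^+-E_j(k))^{1/2}$, uniformly in $\lambda>0$. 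By Theorem \ref{pr-vs-band} one can choose $\tilde A$ so large that $\|V^{1/2}(T_j(\lambda,\tilde A)^{1/2}-T_{j,\infty}(\lambda,\tilde A)^{1/2})\|$ lies below any prescribed threshold, whence its contribution to the singular-value counting function is identically zero — not merely $O(1)$ — with no leftover divergent factor to absorb. The remaining pieces $T_j(\lambda,A)^{1/2}-T_j(\lambda,\tilde A)^{1/2}$ and $T_{j,\infty}(\lambda,A)^{1/2}-T_{j,\infty}(\lambda,\tilde A)^{1/2}$, supported on $k\in(A,\tilde A]$ where Lemma \ref{l10} does give a uniform gap, converge in norm as $\lambda\downarrow 0$ and hence contribute $O(1)$; the Ky Fan inequalities \eqref{lau13} then assemble the two-sided bound with the $(1\pm\delta)$ spreading. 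Without passing to the square roots, the tail estimate in your scheme cannot be repaired.
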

\begin{proof} First note that $n_+(r;V^{1/2}T_j(\lambda,A)V^{1/2})=n_*(r^{1/2};V^{1/2}T_j(\lambda,A)^{1/2})$.
By Lemma \ref{l10}, for any $\tilde{A}\in (A,\infty)$
$$\begin{array}{l}n_*(r;V^{1/2}(T_j(\lambda,A)^{1/2}-T_j(\lambda,\tilde{A})^{1/2}))=O(1)\\
n_*(r;V^{1/2}(T_{j,\infty}(\lambda,A)^{1/2}-T_{j,\infty}(\lambda,\tilde{A})^{1/2}))=O(1), \quad \lambda \downarrow 0,
\end{array}$$
 since both $T_j(\lambda,A)^{1/2}-T_j(\lambda,\tilde{A})^{1/2}$ and $T_{j,\infty}(\lambda,A)^{1/2}-T_{j,\infty}(\lambda,\tilde{A})^{1/2}$ have a limit in the norm sense when $\lambda \downarrow 0$. Thanks to Theorem \ref{pr-vs-band} it is possible to choose $\tilde{A}$ big enough such that
$$n_*(r;V^{1/2}(T_j(\lambda,\tilde{A})^{1/2}-T_{j,\infty}(\lambda,\tilde{A})^{1/2}))=0.$$

Using the  Ky-Fan inequalities \eqref{lau13} we  get  \eqref{2a} (see Proposition 3.1 and Theorem 3.2 in \cite{bmr1} for a detailed  proof of a similar result).
\end{proof}

Now we are ready to finish the proof of Theorem \ref{eh}. Putting together \eqref{B-S_prin}, \eqref{1} and \eqref{2a}, we obtain that for any $A\in[-\infty,\infty)$ and $\delta \in (0,1)$
\bel{2}\begin{array}{lcl}
    &  &n_+((1+\delta);V^{1/2}T_{j,\infty}(\lambda,A)V^{1/2})+O(1)   \\
     \leq&{\mathcal N}_j(\lambda)  \leq &n_+((1-\delta);V^{1/2}T_{j,\infty}(\lambda,A)V^{1/2})+O(1),  \quad \lambda \downarrow 0.
  \end{array}
\ee
For $A\in[-\infty,\infty)$ define
$$P_{j,\infty}(A):=\mathcal{F}^*\int_{(A,\infty)}^\oplus \pi_{j,\infty}(k)\;dk\, \mathcal{F},$$
then,  setting  $A=-\infty$ we obtain that for any $r>0$
\bel{may4}
\begin{array}{ll}
  &n_+(r;V^{1/2}T_{j,\infty}(\lambda,-\infty)V^{1/2})\\
  =&n_+(r;T_{j,\infty}(\lambda,-\infty)^{1/2}VT_{j,\infty}(\lambda,-\infty)^{1/2})
 \\
  = & n_+(r;({\mathcal E}^+_j-E_j(\cdot)+\lambda)^{-1/2}{\mathcal F}P_{j,\infty}(-\infty)VP_{j,\infty}(-\infty){\mathcal F}^*({\mathcal E}^+_j-E_j(\cdot)+\lambda)^{-1/2}).

\end{array}
\ee
Let ${\mathcal U}:L^2(\R)\to {\mathcal F}P_{j,\infty}(-\infty){\mathcal F}^*\left(L^2(\R^2)\right)$, defined by $({\mathcal U}g)(x,k)=B_+^{1/4}g(k)\varphi_j(B_+^{1/2}x-B_+^{1/2}b^{-1}(k))$. The operator ${\mathcal U}$ is unitary   and \bel{nov13}{\mathcal U}^*{\mathcal F}P_{j,\infty}(-\infty)VP_{j,\infty}(-\infty){\mathcal F}^*{\mathcal U}={\mathcal V}_j,\ee
$${\mathcal U}^*({\mathcal E}_j^+-E_j(\cdot)+\lambda)^{-1}{\mathcal U}=({\mathcal E}_j^+-E_j(\cdot)+\lambda)^{-1}.$$ Use \eqref{2}, \eqref{may4} and \eqref{nov13} together with the Birman-Schwinger principle to get \eqref{dj5}.

\subsection{Proof of Corollary \ref{coro1}} 

From  inequality \eqref{2}, we see that to prove this corollary  it is enough to show  that for some $A\in[\infty,\infty)$ and $r\in(0,1)$
\bel{22}
 n_+(r^2;V^{1/2}T_{j,\infty}(\lambda,A)V^{1/2})=n_*({r};T_{j,\infty}(\lambda,A)^{1/2}V^{1/2})=O(1), \quad \lambda \downarrow 0.
\ee
The Chebyshev-type estimate  \eqref{dj36}, with $p=2$, states that
\bel{23}
\begin{array}{ll}&\displaystyle{n_*(r;T_{j,\infty}(\lambda,A)^{1/2}V^{1/2})\leq r^{-2}\|T_{j,\infty}(\lambda,A)^{1/2}V^{1/2}\|_2^2}\\
=&\displaystyle{\frac{1}{2\pi r^2}\int_A^{\infty} \int_{\rd} ({\mathcal E}^+_j- E_j(k) + \lambda)^{-1} \psi_{j,\infty}(x,k)^2 V(x,y)\; dx\, dy\, dk,}\end{array}\ee
where we have used \eqref{may10} and \eqref{dic11}. Here and in the sequel we will assume without loss of generality that ${\bf x}^+=0$.  Indeed, for ${\bf x}^+$ finite, this follows from  a translation along the x-axis and using the gauge invariance of $H$. If ${\bf x}^+$ is infinite, thanks to \eqref{sep9b}-\eqref{compare} we may replace $B$ by  a function $\widetilde{B}$ such that $\widetilde{B}(x)\geq B(x)$ and   that the number ${\bf x}^+_{\widetilde{B}}:=\inf \{x \in \re \, ; \, \tilde{B}(t) =B_+\,  \,\mbox{for almost all} \,\, t\,\,\mbox{in}\,(x,\infty) \}$ is equal to zero, and then use \eqref{vital} in \eqref{ine_band} below.

Put $X^+:=\sup\{ x \in \re \, ; \,\,\mbox{for some} \,\, y\in \R,  (x,y) \in {\rm ess}\,{\rm supp}\,V\}$. Take $\tilde{x}$ such that  ${X}^+<\tilde{x} <0={\bf x}^+$, and define the  step function
\bel{aug25a}{W}(x):=\left\{\begin{matrix} b(\tilde{x})^2-(B_+\tilde{x})^2 &\mbox{for}\, x< \tilde{x} \\
0 & \mbox{for}\, x \geq \tilde{x}.\end{matrix} \right.\ee
Setting   $h^W(k)$ as  the  operator given by
$$-\frac{d^2}{dx^2}+(B_+x-B_+b^{-1}(k))^2+W(x),$$
self-adjoint  in $L^2(\R)$, it is not difficult to  see that for $k>0 $
\bel{ine2}
 h(k) \leq h^{W}(k).
\ee
The spectrum of $h^W(k)$ is discrete and simple. Denote by  $\{E^W_j(k)\}_{j=1}^\infty$ the increasing sequence of eigenvalues of $h^W(k)$. Inequality   \eqref{ine2} implies that
\bel{ine_band}
E_j(k)\leq E^W_j(k),
\ee
and then $({\mathcal E}^+_j-E_j(k)+\lambda)^{-1}\leq({\mathcal E}^+_j-E^W_j(k)+\lambda)^{-1}$ for all $j\in \N$, $k>0$ and $\lambda>0$.

By Proposition 4.2 of \cite{bmr1}, we know that there exists a positive constant $C_j$ such that for all $k$ big enough
$${\mathcal E}^+_j - E^{W}_j(k)\geq C_j (B_+b^{-1}(k))^{2j-3}e^{-B_+(b^{-1}(k)-\tilde{x})^2}.$$
Then for $A>0$ large, for any $\lambda>0$
\begingroup
\renewcommand*{\arraystretch}{2}
\begin{equation*}\begin{array}{ll}
&\displaystyle{\int_A^{\infty} \int_{\rd} ({\mathcal E}^+_j- E_j(k) + \lambda)^{-1} \psi_{j,\infty}(x,k)^2 V(x,y) \,dx\, dy\, dk}\\
\leq&\displaystyle{\frac{1}{B_+^{2j-3}C_j}\int_A^{\infty}\int_{\R^2} k^{3-2j}e^{2k(x-\tilde{x})}e^{B_+(\tilde{x}^2-x^2)}{\rm H}_j(B^{1/2}_+x-k/B_+^{1/2})^2 V(x,y)\, dy\, dx\, dk,}
\end{array}
\end{equation*}
\endgroup
where we have used that $b^{-1}(k)=k/B_+$ for $k>0$, due to ${\bf x}^+=0$.
The last integral can be decomposed into a finite sum  of terms of the form
 \begin{equation*}\begin{array}{ll}&\displaystyle{C_{l,n}e^{B_+\tilde{x}^2}\int_A^{\infty}\int_{\R^2} k^{l}x^n e^{2k(x-\tilde{x})}e^{-B_+x^2} V(x,y)\, dy \,dx\, dk}\\
    \leq&\displaystyle{\left\|\int_\R V(x,y)dy\right\|_{L^\infty(\R)}|C_{l,n}|e^{B_+ \tilde{x}^2}
    \int_A^{\infty}k^{l} e^{2k(X^+-\tilde{x})}\,dk\int_{-\infty}^{X^+} |x|^n e^{-B_+x^2} \,dx},
    \end{array}\end{equation*}
for some  constants $C_{l,n}$, and integers  $l,n$. Each one of this terms is finite because of  our choice of $\tilde{x}$.

\subsection{Proof of Corollary \ref{coro2}}
Let us first show  how to obtain the  upper bound in \eqref{dj25}. As in the proof of Corollary \ref{coro1},  take the function $W$ defined in \eqref{aug25a}, and for $A\in [-\infty,\infty)$, $\lambda >0$ set
$$T^W_{j,\infty}(\lambda,A):=\mathcal{F}^*\int_{(A,\infty)}^\oplus(\mathcal{E}^+_j-E^W_j(k)+\lambda )^{-1} \pi_{j,\infty}(k)\;dk \,\mathcal{F}.$$
From \eqref{ine_band}, $T_{j,\infty}(\lambda,A)\leq T^{W}_{j,\infty}(\lambda,A)$, thus \eqref{2} implies that for all $A \in [-\infty,\infty)$ and $r\in(0,1)$
\bel{aug25}
{\mathcal N}_j(\lambda)\leq n_+(1-r;V^{1/2}T^W_{j,\infty}(\lambda,A)V^{1/2})+O(1), \quad \lambda \downarrow 0.
\ee

The asymptotic behavior of the function $n_+(1-r;V^{1/2}T^W_{j,\infty}(\lambda,A)V^{1/2})$ was studied in \cite{bmr1} where it is shown that (Theorems 5.1 and 6.1)
\bel{oct7d}\limsup_{\lambda \downarrow 0}\frac{ n_+(1-r;V^{1/2}T^W_{j,\infty}(\lambda,A)V^{1/2})}{|\ln \lambda|^{1/2}}\leq {\mathcal C}_+.\ee
Putting together \eqref{aug25} and \eqref{oct7d} we get the upper bound in \eqref{dj25}.

For the lower bound consider the operators  $h_+^N(k):=-d^2/dx^2+(B_+x-k)^2$ and $h_-^N(k):=-d^2/dx^2+(B_-x-k)^2$ defined in $L^2(\R_+)$ and  $L^2(\R_-)$,
respectively, both  with a Neumann boundary condition at zero.
From the monotonicity property with respect to the   Neumann  conditions,  and from \eqref{sep9b} we   obtain that
\bel{dic12}h(k)\geq h_-^N(k)\oplus h_+^N(k)\ee
(recall that ${\bf x}^+=0$, which implies that $b(x)=B_+x$\, for $x\geq0$). The operators $h_\pm^N(k)$ have  discrete and simple spectrum for any $k\in \R$. Denoting by $\{E_j^{N\pm}(k)\}_{j=1}^\infty$  their increasing
sequences  of eigenvalues, and using  that
$$\lim_{k\to\infty}E_1^{N-}(k)=\infty,\quad \lim_{k\to\infty}E_j^{N+}(k)={\mathcal E}_j^+, $$
(see e.g. \cite{fh}), we can conclude from  \eqref{dic12}  that  for any $j\in\N$ there exists a constant $K_j$ such that
\bel{aug25c}E_j(k)\geq E^{N+}_j(k),\quad \mbox{for}\,\,k\geq K_j.\ee

Set
$$T^N_{j,\infty}(\lambda,A):=\mathcal{F}^*\int_{(A,\infty)}^\oplus(\mathcal{E}^+_j-E^{N+}_j(k)+\lambda )^{-1} \pi_{j,\infty}(k)\;dk\, \mathcal{F}.$$
Inequality  \eqref{2} along with  \eqref{aug25c} imply that for any $r\in(0,1)$ and $A\geq K_j$
\bel{aug25b}
{\mathcal N}_j(\lambda)\geq n_+(1+r;V^{1/2}T^N_{j,\infty}(\lambda,A)V^{1/2})+O(1), \quad \lambda \downarrow 0.
\ee
Besides, it is shown in  \cite{popoff}   that for some positive  constant $C_j$
$$\mathcal{E}^+_j-E^{N+}_j(k)=C_j k^{2j-1}e^{-k^2/B_+}(1+o(1)), \quad k \to \infty.$$
Then,  we can repeat  the proofs  of Proposition 3.7 and Corollary 3.9  in \cite{bmr2} in order to obtain
\bel{nov18a}\liminf_{\lambda \downarrow 0}\frac{ n_+(1+r;V^{1/2}T^N_{j,\infty}(\lambda,A)V^{1/2})}{|\ln(\lambda)|^{1/2}}\geq {\mathcal C}_-.\ee
The inequalities   \eqref{aug25b}, \eqref{nov18a} imply the lower bound in \eqref{dj25}.

\subsection{Proof of Corollary \ref{V_power}: Upper bound}\label{sub_upp}
The starting point of this proof is, as for Corollaries \ref{coro1}, \ref{coro2}, the inequalities \ref{2}.
We will  denote  the operator $T_{j,\infty}(\lambda,-\infty)$ simply by $T_{j,\infty}(\lambda)$, and $P_{j,\infty}(-\infty)$ by $P_{j,\infty}$.
Also from  now on, without any lost of generality, we will take $s=0$ for the function \eqref{apr20}.
That means,  we will prove \eqref{the4} for $N(\lambda,V,0)$ (see \emph {Remark} ii after Corollary \ref{V_power}).

Let $\varepsilon>0$ and take    a smooth function $\chi_\varepsilon$ with bounded derivatives  such that $0\leq \chi_\varepsilon(x) \leq 1$ for all $x \in \R$, $\chi_\varepsilon(x)=0$ for $x\leq-2\varepsilon$ and $\chi_\varepsilon(x)=1$ for $x\geq-\varepsilon$. Define
\bel{may9}
V_\varepsilon(x,\xi):=\chi_\varepsilon(x) V(x,\xi).
\ee

The   Weyl's inequalities say that  for any $r>0$, $\delta \in (0,1)$, and $\lambda >0$
\bel{may3}
\begin{array}{lcl}n_+(r;T_{j,\infty}(\lambda)^{1/2}VT_{j,\infty}(\lambda)^{1/2}) &\leq &n_+(r(1-\delta);T_{j,\infty}(\lambda)^{1/2}V_\varepsilon T_{j,\infty}(\lambda)^{1/2})\\
 &+&n_+(r\delta;T_{j,\infty}(\lambda)^{1/2}(V-V_\varepsilon)T_{j,\infty}(\lambda)^{1/2}).\end{array}\ee
The function  $V-V_\varepsilon$ is equal to zero for $x\geq-\varepsilon$. Arguing as in the proof of Corollary \ref{coro1}, we can see that for any $r>0$
\bel{may6} n_+(r;T_{j,\infty}(\lambda)^{1/2}(V-V_\varepsilon)T_{j,\infty}(\lambda)^{1/2})
=n_*(\sqrt{r};T_{j,\infty}(\lambda)^{1/2}(V-V_\varepsilon)^{1/2})=O(1), \quad \lambda \downarrow 0.\ee
Now, since $E_j(k)\leq {\mathcal E}_j^+$, $T_{j,\infty}(\lambda)\leq \lambda^{-1}P_{j,\infty}$, thus  the min-max principle implies that for all $r >0$ and $\lambda >0$
\bel{may7}
\begin{array}{llll}
 &n_+(r;T_{j,\infty}(\lambda)^{1/2}V_\varepsilon T_{j,\infty}(\lambda)^{1/2})&\leq& n_+(r\lambda;P_{j,\infty}V_\varepsilon P_{j,\infty}).\\
 \end{array}\ee

Next, let us introduce a class of symbols suitable for our purposes. For $(x,\xi) \in \R^2$ consider the quadratic form in $\R^2$
$$ g_{x,\xi}(y,\eta)=|y|^2+\frac{|\eta|^2}{\langle x,\xi\rangle^2},$$
and for $p$, $q$ $\in\R$, define the weight $w:=\langle x\rangle ^{p}\langle x,\xi\rangle ^q.$
Then, according to \cite[Definition 18.4.6]{hor3}, consider the  class of symbols $S_p^q:=S(w,g)$. A symbol $a$ is in $S_p^q$ if
for any $(\alpha,\beta) \in \Z_+^2$, the quantity
\bel{sep25b}n^{p,q}_{\alpha,\beta}(a):=\sup_{(x,\xi) \in \R^2}|\langle x\rangle^{-p}\langle x,\xi\rangle ^{-q+\alpha}\partial_\xi^\alpha \partial_x^\beta a(x,\xi)| \ee
is finite.

For $a\in S_p^q$ we define the operator  $Op^W(a)$ according to the Weyl quantization
$$(Op^{W}(a)u)(x):=\frac{1}{2\pi}\int_{\R^2}a\left(\frac{x+y}{2},\xi\right)e^{-i(x-y)\xi}u(y)\,dy\,d\xi,$$
for $u$ in the Schwartz space ${\mathcal S}(\R)$.

Since $V$ satisfies \eqref{apr14} it is obvious that $V_\varepsilon$ is in $S^{-m}_0$. Moreover, using \eqref{b_bound} b),
it is also true that the function $$\widetilde{V}_\varepsilon(x,\xi):=V_\varepsilon(b^{-1}(x),-\xi)$$
belongs to $ S^{-m}_0$. Due to  $m>0$, the operator $Op^{W}(\widetilde{V}_{\varepsilon})$ is compact in $L^2(\R)$.

Using the same notation of Theorem \ref{eh}, write ${\mathcal V}_{\varepsilon,j}$ for the pseudodifferential operator with contravariant symbol $V_{\varepsilon}$
defined  by  \eqref{antiwick}.

\begin{lemma}\label{shirai}
For any $\varepsilon >0$ and $j\in\N$
\bel{sep25}
\mathcal{V}_{\varepsilon,j}-Op^{W}(\widetilde{V}_{\varepsilon})=Op^{W}(R_1)+Op^{W}(R_2),
\ee
where the symbol $R_1\in S^{-m-1}_0$ and $R_2\in S^{-m}_{-m}.$
\end{lemma}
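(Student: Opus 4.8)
The plan is to express both $\mathcal{V}_{\varepsilon,j}$ and $Op^W(\widetilde V_\varepsilon)$ explicitly as pseudodifferential operators and compare their symbols via a Taylor expansion of $V_\varepsilon$ around the relevant point. First I would compute the Weyl symbol of the anti-Wick operator $\mathcal{V}_{\varepsilon,j}$. By definition \eqref{antiwick}, $\mathcal{V}_{\varepsilon,j}=\frac{B_+}{2\pi}\int_{\R^2}V_\varepsilon(x,\xi)\,\mathcal{P}_{j;x,\xi}\,dx\,d\xi$, and using the explicit form \eqref{sof9} of $\Psi_{j;x,\xi}$ together with the change of variables $x\mapsto b^{-1}(x)$ (which produces the Jacobian factor $1/B(b^{-1}(x))$, to be handled via \eqref{b_bound}b)), one rewrites the kernel of $\mathcal{P}_{j;x,\xi}$ in a form where the $\xi$-integration is a Fourier transform. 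The upshot, as in the standard anti-Wick $\to$ Weyl calculus (Berezin transform), is that the Weyl symbol of $\mathcal{V}_{\varepsilon,j}$ is obtained from $\widetilde V_\varepsilon$ by convolution with a Gaussian of width $\sim B_+^{-1}$ in each variable, plus lower-order corrections coming from the $b^{-1}$-substitution and the non-constancy of $B$.

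Second, I would perform the Taylor expansion: writing the Gaussian convolution as $\widetilde V_\varepsilon + \tfrac{1}{4B_+}(\partial_x^2+\partial_\xi^2)\widetilde V_\varepsilon + \cdots$, the leading term cancels $Op^W(\widetilde V_\varepsilon)$ on the left side of \eqref{sep25}, and the first correction term involves two derivatives of $\widetilde V_\varepsilon$. Since $\widetilde V_\varepsilon\in S_0^{-m}$, differentiating twice in $\xi$ gains two powers of $\langle x,\xi\rangle^{-1}$, placing $\partial_\xi^2\widetilde V_\varepsilon$ in $S_0^{-m-2}\subset S_0^{-m-1}$; differentiating twice in $x$ only gains decay in $\langle x\rangle$, so $\partial_x^2\widetilde V_\varepsilon\in S_{-2}^{-m}\subset S_{-m}^{-m}$ (using $p=-2\le -m$ is false in general, so more carefully one keeps track: the mixed and pure-$x$ second-derivative terms land in $S_{-1}^{-m-1}+S^{-m}_{-m}$ after regrouping, which is contained in $S_0^{-m-1}+S_{-m}^{-m}$). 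Grouping the terms with at least one $\xi$-derivative or an overall extra $\langle x,\xi\rangle$-decay into $R_1\in S_0^{-m-1}$, and the genuinely $x$-dominated remainder into $R_2\in S_{-m}^{-m}$, gives the claimed decomposition. The contributions from the $b^{-1}$-change of variables and from $B_+-B$ must be checked to respect these symbol classes; this is where conditions \eqref{b_bound} and, for the power-like decay rate, \eqref{may14} enter, and one uses that $b^{-1}$ is smooth with derivatives controlled by $1/B_-$.

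Third, the tail of the Taylor expansion (all terms beyond the first correction, and the integral-form remainder) must be shown to lie in $S_0^{-m-1}$; this follows from the rapid decay of the Gaussian together with the boundedness of all derivatives of $V_\varepsilon$ guaranteed by \eqref{apr14}, by a standard estimate of the integral remainder in the Weyl–anti-Wick correspondence (one bounds $\int |t|^2 |\nabla^2 V_\varepsilon(\cdot+t)| e^{-B_+|t|^2}\,dt$ and uses that $\langle x,\xi\rangle$ is slowly varying to pull out the weight). I expect the main obstacle to be bookkeeping: carefully tracking how the substitution $x\mapsto b^{-1}(x)$ interacts with the metric $g_{x,\xi}$ and the weights $\langle x\rangle^p\langle x,\xi\rangle^q$ — in particular verifying that $b^{-1}$ maps the symbol class $S_0^{-m}$ (in the variable before substitution) into the same class afterwards, which rests on the two-sided bound \eqref{dic9} and the fact that $(b^{-1})'=1/B\circ b^{-1}$ is bounded above and below. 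Once that is in hand, the derivative-counting that separates the $S_0^{-m-1}$ part from the $S_{-m}^{-m}$ part is routine. I would also remark that the splitting into $R_1$ and $R_2$ is not canonical; any decomposition with $R_1$ in the smaller class and $R_2$ in $S_{-m}^{-m}$ suffices for the later argument.
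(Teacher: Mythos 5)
There is a genuine gap, and it concerns precisely the point the lemma is about: where the class $S^{-m}_{-m}$ comes from. In the paper the remainder is produced by a first-order Taylor expansion of $V_\varepsilon=\chi_\varepsilon V$ inside the integral formula for the Weyl symbol of the anti-Wick operator \eqref{antiwick}, and the two classes arise from the splitting $\partial_1 V_\varepsilon=(\partial_1V)\chi_\varepsilon+V(\partial_1\chi_\varepsilon)$, $\partial_2V_\varepsilon=(\partial_2V)\chi_\varepsilon$: by \eqref{apr14} the terms in which the derivative falls on $V$ lie in $S_0^{-m-1}$ (they feed $R_1$), while the term $V\partial_1\chi_\varepsilon$ is supported in the strip $-2\varepsilon\leq x\leq-\varepsilon$, hence lies in $S^{-m}_{-p}$ for \emph{every} $p$, in particular $p=m$ (this is $R_2$). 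Your proposal never isolates the derivative hitting the cutoff $\chi_\varepsilon$; instead you attribute $R_2$ to the pure $x$-derivatives of $\widetilde V_\varepsilon$, arguing that ``differentiating twice in $x$ only gains decay in $\langle x\rangle$''. That is incorrect on two counts: differentiation in $x$ gains no $\langle x\rangle$-decay at all for a general symbol of the class \eqref{sep25b}, and for the specific $V$ of \eqref{apr14} an $x$-derivative gains a full power of $\langle x,\xi\rangle^{-1}$, so those terms belong in $R_1$, not $R_2$. The inclusion $S^{-m}_{-2}\subset S^{-m}_{-m}$ you invoke holds only for $m\leq 2$, and your fallback ``regrouping'' into $S^{-m-1}_{-1}+S^{-m}_{-m}$ is asserted, not proved; for $m>2$ the argument as written does not close. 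The missing idea is simply the compact support (in $x$) of $\partial_1\chi_\varepsilon$, which is what makes the arbitrarily strong $\langle x\rangle$-weight available.

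A secondary inaccuracy: the anti-Wick-to-Weyl correspondence is a convolution with a Gaussian only for $j=1$; for $j\geq 2$ the states \eqref{sof9} are Hermite-based and the relevant kernel is the Wigner function of $\varphi_j$ (Laguerre-type), and in addition the $b^{-1}$-substitution makes the family non-standard, so the transform is not an exact Gaussian smoothing. The paper avoids this by writing the Weyl symbol of ${\mathcal V}_{\varepsilon,j}$ as an explicit triple integral against $V_\varepsilon$ and Taylor-expanding to first order, following the estimates of Shirai's Lemma 5.1; your heat-expansion bookkeeping could probably be made to work along similar lines, but only after the cutoff-derivative term is identified as the sole source of $R_2$ and the Laguerre (rather than Gaussian) kernel is handled.
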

\begin{proof}
We give a sketch of the proof which is based on  the proof of \cite[Lemma 5.1]{shi2}. Suppose that $V$ is in the Schwartz space $\mathcal {S}(\R^2)$. Then, from \eqref{antiwick}  the  Weyl symbol $p_V$ of $\mathcal{V}_{\varepsilon,j}$  is given by
$$p_V(\eta,\eta^*)=\frac{B_+}{2\pi}\int_{\R^3} e^{-iw\eta^*} \Psi_{j;x,\xi}\left(\eta+w/2\right)\overline{\Psi_{j;x,\xi}\left(\eta-w/2\right)} V_\varepsilon(x,\xi)\;dx\,d\xi\,dw,$$
$\Psi_{j;x,\xi}$ being defined in \eqref{sof9}. We use a  first order Taylor expansion  of $V_\varepsilon$,  noticing  that $\partial_1V_\varepsilon=(\partial_1V)\chi_\varepsilon+V(\partial_1\chi_\varepsilon)$, $\partial_2V_\varepsilon=(\partial_2 V)\chi_\varepsilon$. Because of \eqref{apr14},   $(\partial_1V)\chi_\varepsilon, (\partial_2 V)\chi_\varepsilon \in S^{-m-1}_0$. On the other side, the  partial derivative $\partial_1\chi_\varepsilon$ has compact support which implies that  $V(\partial_1\chi_\varepsilon)\in S^{-m}_{-p}$ for any $p>0$, in particular for $p=m$. Now we  use the same estimates given in the proof of  \cite[Lemma 5.1]{shi2} to conclude that $\widetilde{V}_{\varepsilon}$ is a principal symbol for $\mathcal{V}_{\varepsilon,j}$, and that the remainder terms, coming from  the Taylor expansion, satisfy the required conditions.
\end{proof}
For a measurable function $a:\R^2\to\R_+$ define
$$N(\lambda,a):=\frac{1}{2\pi}vol\{(x,\xi)\in\R^2;a(x,\xi)>\lambda\}.$$
Lemma \ref{shirai} together with  \cite[Lemma 4.7]{dauro} imply that there exists   a positive  $\lambda_0$ such that
$$ n_+(\lambda;Op^W(R_1))= O(N(\lambda,\langle x,\xi\rangle^{-m-1}))=O(\lambda^{-\frac{2}{m+1}}),$$
$$ n_+(\lambda; Op^W(R_2))=O(N(\lambda,\langle x,\xi\rangle^{-m}\langle x\rangle^{-m}))=O(\lambda^{-\frac{1}{2m}-\frac{1}{m}}),$$
{for} $\lambda \in [0,\lambda_0]$. Then,   \eqref{sep25} and the Weyl inequalities imply that for all  $\delta\in(0,1)$
\bel{may2} n_+(\lambda; {\mathcal V}_{\varepsilon, j})\leq n_+((1-\delta)\lambda;Op^W(\widetilde{V}_{\varepsilon}))+ o(\lambda^{-2/m}),\quad \lambda \downarrow 0.\ee
Putting together  \eqref{2},  \eqref{may3}, \eqref{may6}, \eqref{may7}, \eqref{nov13} and \eqref{may2} we obtain that for all $\delta \in (0,1)$
\bel{oct21}{\mathcal N}_j(\lambda)\leq n_+((1-\delta)\lambda;Op^W(\widetilde{V}_{\varepsilon}))+ o(\lambda^{-2/m}),\quad \lambda \downarrow 0.\ee

\begin{lemma}\label{lhomo}
For any $\varepsilon>0$ the function $N(\lambda,\widetilde{V}_\varepsilon)$ satisfies the homogeneity condition \eqref{jul10}
\end{lemma}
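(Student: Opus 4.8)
The plan is to relate the distribution function $N(\lambda,\widetilde V_\varepsilon)$ directly to $N(\lambda,V_\varepsilon)$, and then to $N(\lambda,V,0)$, using the definitions \eqref{apr20} and \eqref{may9}, after which the homogeneity condition \eqref{jul10} for $N(\lambda,\widetilde V_\varepsilon)$ follows from the hypothesis that $N(\lambda,V,0)$ satisfies \eqref{jul10}. First I would observe that the change of variables $(x,\xi)\mapsto(b^{-1}(x),-\xi)$ is a diffeomorphism of $\R^2$ whose Jacobian is $(b^{-1})'(x)=1/B(b^{-1}(x))$, which by \eqref{b_bound} b) lies between $1/B_+$ and $1/B_-$. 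Hence for any $\lambda>0$,
\begin{equation*}
\frac{1}{B_+}\,\mathrm{vol}\{V_\varepsilon>\lambda\}\le \mathrm{vol}\{\widetilde V_\varepsilon>\lambda\}\le \frac{1}{B_-}\,\mathrm{vol}\{V_\varepsilon>\lambda\},
\end{equation*}
so $N(\lambda,\widetilde V_\varepsilon)$ is trapped between two fixed multiples of $N(\lambda,V_\varepsilon)$. Since $V_\varepsilon=\chi_\varepsilon V$ with $0\le\chi_\varepsilon\le 1$ and $\chi_\varepsilon\equiv 1$ for $x\ge-\varepsilon$, we have $\{V>\lambda,\ x>0\}\subset\{V_\varepsilon>\lambda\}\subset\{V>\lambda,\ x>-2\varepsilon\}$, which gives $N(\lambda,V,0)\le N(\lambda,V_\varepsilon)\le N(\lambda,V,-2\varepsilon)$.

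Next I would use Remark ii) after Corollary \ref{V_power}: condition \eqref{apr14} forces $N(\lambda,V,s)/N(\lambda,V,0)\to 1$ as $\lambda\downarrow 0$ for every $s\in\R$, and likewise $N(\lambda,V,s)$ satisfies \eqref{jul10} for every $s$ once $N(\lambda,V,0)$ does. Combining this with the two-sided sandwich from the previous paragraph, there exist constants $0<c\le C<\infty$ (depending on $B_\pm$ and $\varepsilon$) and a function $\rho(\lambda)\to 1$ such that $c\,\rho(\lambda)N(\lambda,V,0)\le N(\lambda,\widetilde V_\varepsilon)\le C\,\rho(\lambda)N(\lambda,V,0)$ for small $\lambda$. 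More precisely, since $N(\lambda,V,-2\varepsilon)=N(\lambda,V,0)(1+o(1))$, one in fact gets, for each fixed $\varepsilon$, that $N(\lambda,\widetilde V_\varepsilon)$ is comparable to $N(\lambda,V,0)$ up to the uniform constants $B_+^{-1}$, $B_-^{-1}$ and a $(1+o(1))$ factor.

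Finally I would verify \eqref{jul10} for $N(\lambda,\widetilde V_\varepsilon)$ by direct estimation. Writing $f(\lambda):=N(\lambda,\widetilde V_\varepsilon)$, we must show $\lim_{\epsilon\downarrow 0}\limsup_{\lambda\downarrow 0}\lambda^{2/m}\big(f(\lambda(1-\epsilon))-f(\lambda(1+\epsilon))\big)=0$. Using $f(\lambda(1-\epsilon))\le B_-^{-1}N(\lambda(1-\epsilon),V_\varepsilon)\le B_-^{-1}N(\lambda(1-\epsilon),V,-2\varepsilon)$ and $f(\lambda(1+\epsilon))\ge B_+^{-1}N(\lambda(1+\epsilon),V_\varepsilon)\ge B_+^{-1}N(\lambda(1+\epsilon),V,0)$, the difference $f(\lambda(1-\epsilon))-f(\lambda(1+\epsilon))$ is bounded above by a combination of (i) $B_-^{-1}\big(N(\lambda(1-\epsilon),V,-2\varepsilon)-N(\lambda(1+\epsilon),V,-2\varepsilon)\big)$, which after multiplication by $\lambda^{2/m}$ has vanishing $\limsup$ as $\epsilon\downarrow 0$ because $N(\cdot,V,-2\varepsilon)$ satisfies \eqref{jul10}, and (ii) the fixed-gap term $(B_-^{-1}-B_+^{-1})N(\lambda(1+\epsilon),V,0)$, which is $O(\lambda^{-2/m})$ but does \emph{not} shrink with $\epsilon$. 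The main obstacle is precisely that last term: the crude sandwich with different constants $B_-^{-1}\ne B_+^{-1}$ is too lossy. To overcome it I would not pass through $N(\lambda,V_\varepsilon)$ with constant bounds, but instead perform the change of variables inside the volume integral more carefully, writing
\begin{equation*}
N(\lambda,\widetilde V_\varepsilon)=\frac{1}{2\pi}\int_{\{V_\varepsilon(x,\xi)>\lambda\}}\frac{dx\,d\xi}{B(x)},
\end{equation*}
so that $N(\lambda(1-\epsilon),\widetilde V_\varepsilon)-N(\lambda(1+\epsilon),\widetilde V_\varepsilon)=\frac{1}{2\pi}\int_{\{\lambda(1+\epsilon)<V_\varepsilon\le\lambda(1-\epsilon)\}}B(x)^{-1}\,dx\,d\xi\le B_-^{-1}\big(N(\lambda(1-\epsilon),V_\varepsilon)-N(\lambda(1+\epsilon),V_\varepsilon)\big)$, and now \emph{the same} constant multiplies both terms. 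Then \eqref{jul10} for $N(\cdot,\widetilde V_\varepsilon)$ reduces to \eqref{jul10} for $N(\cdot,V_\varepsilon)$, which in turn follows from \eqref{jul10} for $N(\cdot,V,0)$ together with the squeeze $N(\lambda,V,0)\le N(\lambda,V_\varepsilon)\le N(\lambda,V,-2\varepsilon)$ and the fact (Remark ii) that both ends are asymptotically equal and both satisfy \eqref{jul10}; the elementary inequality $N(\lambda(1-\epsilon),V_\varepsilon)-N(\lambda(1+\epsilon),V_\varepsilon)\le N(\lambda(1-\epsilon),V,-2\varepsilon)-N(\lambda(1+\epsilon),V,0)$ finishes it after multiplying by $\lambda^{2/m}$, taking $\limsup_{\lambda\downarrow0}$ and then $\epsilon\downarrow 0$, since $N(\lambda,V,-2\varepsilon)-N(\lambda,V,0)=o(\lambda^{-2/m})$.
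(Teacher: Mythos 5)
Your argument is, in substance, the same as the paper's: the paper writes the difference $N((1-\epsilon)\lambda,\widetilde V_\varepsilon)-N((1+\epsilon)\lambda,\widetilde V_\varepsilon)$ as the volume of the level annulus, performs the change of variables $x'=b^{-1}(x)$, $\xi'=-\xi$ so that the magnetic field appears as the Jacobian weight bounded by $B_+$, controls the transition strip $-2\varepsilon<b^{-1}(x)<-\varepsilon$ by \eqref{apr14} as $O(\lambda^{-1/m})=o(\lambda^{-2/m})$, and then invokes \eqref{jul10} for $N(\cdot,V,-\varepsilon)$ (via Remark ii); your refined weighted-integral step is exactly this, and your own diagnosis that the crude two-constant sandwich is too lossy is the right one.

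There is, however, one concrete error: the Jacobian is inverted. Since $\widetilde V_\varepsilon(x,\xi)=V_\varepsilon(b^{-1}(x),-\xi)$, pushing the volume of $\{\widetilde V_\varepsilon>\lambda\}$ to the variables $(x',\xi')=(b^{-1}(x),-\xi)$ gives $dx=B(x')\,dx'$, i.e.
\begin{equation*}
N(\lambda,\widetilde V_\varepsilon)=\frac{1}{2\pi}\int_{\{V_\varepsilon(x',\xi')>\lambda\}}B(x')\,dx'\,d\xi',
\end{equation*}
so $B_-\,N(\lambda,V_\varepsilon)\le N(\lambda,\widetilde V_\varepsilon)\le B_+\,N(\lambda,V_\varepsilon)$, not the bounds $B_+^{-1},B_-^{-1}$ you state, and the uniform constant in your key display must be $B_+$, not $B_-^{-1}$. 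For the present lemma this is harmless, since any fixed constant suffices for \eqref{jul10}; but the orientation matters elsewhere (Lemma \ref{lpm} needs precisely $N(\lambda,\widetilde V_\varepsilon)\sim B_+N(\lambda,V_\varepsilon)$, which produces the factor $B_+$ in \eqref{the4}), so the formula should be corrected. A smaller slip: the annulus should be $\{\lambda(1-\epsilon)<V_\varepsilon\le\lambda(1+\epsilon)\}$, not $\{\lambda(1+\epsilon)<V_\varepsilon\le\lambda(1-\epsilon)\}$. With these corrections your reduction to \eqref{jul10} for $N(\cdot,V_\varepsilon)$ and then to $N(\cdot,V,0)$, using the sandwich $N(\lambda,V,0)\le N(\lambda,V_\varepsilon)\le N(\lambda,V,-2\varepsilon)$ and $N(\lambda,V,-2\varepsilon)-N(\lambda,V,0)=O(\lambda^{-1/m})$, is sound.
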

\begin{proof}
Note that
$$\begin{array}{lll}&2\pi|N((1-\epsilon)\lambda,\widetilde{V}_\varepsilon)-N((1+\epsilon)\lambda,\widetilde{V}_\varepsilon)|&\\
= &{\displaystyle vol{\{(x,\xi)\in\R^2;(1+\epsilon)\lambda \geq \chi_\varepsilon(b^{-1}(x))V(b^{-1}(x),-\xi)>(1-\epsilon)\lambda\}}}&\\
=&{\displaystyle vol{\{(x,\xi)\in\R^2;(1+\epsilon)\lambda \geq V(b^{-1}(x),-\xi)>(1-\epsilon)\lambda, \,b^{-1}(x)\geq-\varepsilon\}}}\\
&+{vol{\{(x,\xi)\in\R^2;(1+\epsilon)\lambda \geq\chi_\varepsilon(b^{-1}(x))V(b^{-1}(x),-\xi)>(1-\epsilon)\lambda, -2\varepsilon <b^{-1}(x)<-\varepsilon\}}}&\\
\leq&{\displaystyle\int_{\{(x',\xi')\in\R^2;(1+\epsilon)\lambda \geq V(x',\xi')>(1-\epsilon)\lambda, \,x'\geq-\varepsilon\}}B(x')\,\,dx' d\xi'}&\\
&+\,\displaystyle{vol\{(x,\xi)\in\R^2;\,C_{0,0}\,\langle b(x)^{-1},\xi\rangle^{-m}>(1-\epsilon)\lambda, -2\varepsilon<b^{-1}(x)< -\varepsilon\}}\\
\leq&{\displaystyle B_+\left( N((1-\epsilon)\lambda,V,-\varepsilon)-N((1+\epsilon)\lambda,V,-\varepsilon)\right)+O(\lambda^{-1/m})},&\end{array}$$
where in the first inequality we have used the change of variables $b^{-1}(x)=x'$, $-\xi=\xi'$,  that $V$  satisfies \eqref{apr14}  and that  $0\leq\chi_\varepsilon\leq 1$.
Since $N(\lambda,V,-\varepsilon)$ fulfils  \eqref{jul10} we obtain the required result.
\end{proof}

\begin{lemma}\label{lpm}
For any $\varepsilon >0$, $N(\lambda,\widetilde{V}_\varepsilon)$ satisfies condition \eqref{jul10a}. Moreover
\bel{nov24}
 \lim_{\lambda \downarrow 0}\frac{B_+ N(\lambda,V, 0)}{N(\lambda,\widetilde{V}_{\varepsilon})} =1.
 \ee
\end{lemma}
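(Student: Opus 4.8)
The plan is to evaluate $N(\lambda,\widetilde{V}_\varepsilon)$ by the change of variables already used in the proof of Lemma~\ref{lhomo}, extract the leading contribution $B_+N(\lambda,V,0)$ by replacing the Jacobian $B$ by the constant $B_+$, and then absorb the resulting errors using \eqref{apr14}, \eqref{may14} and \eqref{jul10a}, together with the comparison $N(\lambda,V,s)/N(\lambda,V,s_0)\to1$ recorded in Remark~ii after Corollary~\ref{V_power}.

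First I would substitute $x'=b^{-1}(x)$ (so that $dx=B(x')\,dx'$ a.e., which is legitimate because $b$ is absolutely continuous and strictly increasing with $b'=B\ge B_->0$) and $\xi'=-\xi$, to obtain, writing $\chi_\varepsilon V$ for $\chi_\varepsilon(x')V(x',\xi')$,
\[
2\pi N(\lambda,\widetilde{V}_\varepsilon)=\int_{\{\chi_\varepsilon V>\lambda\}}B(x')\,dx'\,d\xi'=B_+\,vol\{\chi_\varepsilon V>\lambda\}-\int_{\{\chi_\varepsilon V>\lambda\}}\bigl(B_+-B(x')\bigr)\,dx'\,d\xi'.
\]
Since $\chi_\varepsilon\equiv1$ on $\{x'\ge-\varepsilon\}$ and $\chi_\varepsilon\equiv0$ on $\{x'\le-2\varepsilon\}$, the first term equals $B_+\,2\pi N(\lambda,V,-\varepsilon)$ plus the transition contribution $B_+\,vol\{\chi_\varepsilon V>\lambda,\ -2\varepsilon<x'<-\varepsilon\}$, while the second (a Jacobian defect) is at most $\int_{\{V>\lambda,\ x'>-2\varepsilon\}}(B_+-B(x'))\,dx'\,d\xi'$. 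By \eqref{apr14} with $\alpha=\beta=0$, the condition $V(x',\xi')>\lambda$ forces $\langle x',\xi'\rangle<(C_{0,0}/\lambda)^{1/m}$, so every $\xi'$-slice of these two error integrals has length $O(\lambda^{-1/m})$; the transition contribution, whose $x'$-range has length $\varepsilon$, is therefore $O(\lambda^{-1/m})$, and for the Jacobian defect I would use $B_+-B(x')\le B_+-B_-$ on $(-2\varepsilon,0)$ and $B_+-B(x')=O(\langle x'\rangle^{-M})$ on $(0,\infty)$ from \eqref{may14}, the latter being integrable in $x'$ since $M>m>1$, so this piece is $O(\lambda^{-1/m})$ as well. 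Altogether
\[
2\pi N(\lambda,\widetilde{V}_\varepsilon)=B_+\,2\pi N(\lambda,V,-\varepsilon)+O(\lambda^{-1/m}),\qquad\lambda\downarrow0.
\]

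To conclude, by \eqref{jul10a} and Remark~ii one has $N(\lambda,V,-\varepsilon)=N(\lambda,V,0)(1+o(1))\ge\tfrac{C}{2}\lambda^{-2/m}$ for $\lambda$ small, hence $O(\lambda^{-1/m})=o(N(\lambda,V,0))$ and therefore $N(\lambda,\widetilde{V}_\varepsilon)=B_+N(\lambda,V,0)(1+o(1))$, which is \eqref{nov24}; the same identity gives $N(\lambda,\widetilde{V}_\varepsilon)\ge\tfrac{B_+C}{4}\lambda^{-2/m}$ for $\lambda$ small, i.e.\ condition \eqref{jul10a} for $N(\lambda,\widetilde{V}_\varepsilon)$. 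I do not expect a real obstacle, as the computation closely parallels that of Lemma~\ref{lhomo}; the one point requiring care is the Jacobian defect $\int_{\{V>\lambda\}}(B_+-B(x'))\,dx'\,d\xi'$, where the decay rate in \eqref{may14} — specifically $M>1$, which is guaranteed by $m>1$ — is precisely what keeps this contribution at $O(\lambda^{-1/m})$ and hence negligible against $N(\lambda,V,0)\gtrsim\lambda^{-2/m}$.
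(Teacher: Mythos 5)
Your proposal is correct and follows essentially the same route as the paper: the change of variables $x'=b^{-1}(x)$ with Jacobian $B$, the bound $O(\lambda^{-1/m})$ for the transition strip via \eqref{apr14}, the control of the Jacobian defect $\int(B_+-B)$ via \eqref{may14} with $M>m>1$, and negligibility of all errors against $N(\lambda,V,0)\gtrsim\lambda^{-2/m}$ from \eqref{jul10a}. The only difference is bookkeeping: the paper splits the argument into the two limits \eqref{jul2} and \eqref{jul5} (comparing first $N(\lambda,V_\varepsilon)$ with $N(\lambda,V,0)$ and then $N(\lambda,\widetilde V_\varepsilon)$ with $B_+N(\lambda,V_\varepsilon)$), whereas you do one computation against $N(\lambda,V,-\varepsilon)$ and then invoke Remark ii, which is an equivalent arrangement.
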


\begin{proof} First let us show that
\bel{jul2}
 \lim_{\lambda \downarrow 0}\frac{N(\lambda,V_\varepsilon)}{N(\lambda,V, 0)}=1.\ee
To see this we  estimate   $|N(\lambda,V_\varepsilon)-N(\lambda,V,0)|$, noticing  that
$\{(x,\xi)\in\R^2;V_\varepsilon(x,\xi)>\lambda\}$ and  $\{(x,\xi)\in\R^2;V(x,\xi)>\lambda, \,x>0\}$ differ in a set contained in
$$  \{(x,\xi)\in\R^2;V_\varepsilon(x,\xi)>\lambda,  -2\varepsilon<x\leq0\}.$$
Then in view of $V_\varepsilon \in S_0^{-m}$ 
$$|N(\lambda,V_\varepsilon)-N(\lambda,V,0)|=  O(\lambda^{-1/m}).$$
Using that $N(\lambda,V,0)$ satisfies property \eqref{jul10a}  we obtain \eqref{jul2}.

Now let us prove that
\bel{jul5}
\lim_{\lambda \downarrow 0}\frac{N(\lambda,\widetilde{V}_{\varepsilon})}{B_+N(\lambda, V_\varepsilon)}=1.
\ee
Similarly to the proof of Lemma \ref{lhomo} we have
\bel{nov29}\begin{array}{ll}
&\displaystyle{2\pi|B_+N(\lambda,{V}_{\varepsilon})-N(\lambda,\widetilde{V}_{\varepsilon})|=\int_{\{(x,\xi);V_\varepsilon(x,\xi)>\lambda\}}B_+-B(x)\,dx d\xi}\\
\leq &\displaystyle{ C\lambda^{-1/m}\int_{-2\varepsilon}^{\lambda^{-1/m}}\langle x \rangle^{-M}\,dx=o(\lambda^{-2/m}),\quad \lambda \downarrow 0.}\\
\end{array}
\ee
Where we have used  \eqref{may14} and \eqref{apr14}, and $C$ is a positive constant independent of $\lambda$. Taking into account  \eqref{jul2} along with \eqref{jul10a}, we  obtain \eqref{jul5}.

Putting together \eqref{jul2} and \eqref{jul5} we get \eqref{nov24}.
\end{proof}

Since $N(\lambda,\widetilde{V}_\varepsilon)$ satisfies \eqref{jul10a} and \eqref{jul10}, it follows that it also  satisfies condition $(\rm{T'})$ of \cite{dauro}. Then  \cite[Theorem 1.3]{dauro} says that
$$
\limsup_{\lambda \downarrow 0}\frac{n_+((1-\delta)\lambda;Op^W(\widetilde{V}_{\varepsilon}))}{N((1-\delta)\lambda,\widetilde{V}_{\varepsilon})}=1,
$$
and therefore \eqref{oct21}, \eqref{nov24}  imply that for all $\delta \in (0,1)$
\bel{aug1}\limsup_{\lambda \downarrow 0}\frac{{\mathcal N}_j(\lambda)}{B_+N((1-\delta)\lambda; {V},0)}\leq 1.\ee

To finish the proof of the upper bound in \eqref{the4} it only remains to note that conditions \eqref{jul10a}, \eqref{jul10} imply that
$$\lim_{\delta \downarrow 0} \limsup_{\lambda \downarrow 0}\frac{N((1-\delta)\lambda,V,0)}{N(\lambda,V,0)}=1.$$

\subsection{Proof of Corollary \ref{V_power}: Lower bound}

Condition \eqref{may14} implies that there exits  a  smooth function $\tilde{B}$ such that $ B(x)\geq\tilde{B}(x)\geq B_-$ for all $x\in \R$, and $B_+-\tilde{B}(x)=\tilde{C}\langle x\rangle ^{-M}$, for some positive constant $\tilde{C}$ and  all $x$ sufficiently big. Using $\tilde{B}$ to define $\tilde{b}$ according to \eqref{b_poten}, we see that  \eqref{vital} implies
\bel{sep25d}({\mathcal E}^+_j-E_j(k)+\lambda)^{-1} \geq \left({\mathcal E}^+_j-E_j(\tilde{b}(b^{-1}(k)),\tilde{b})+\lambda\right)^{-1}, \quad \mbox{for all}\, k \in \R,
\ee
where $E_j(k,\tilde{b})$ is defined as in Lemma \ref{l10}.

Since  $B_+-\tilde{B}$ is strictly decreasing for $x$ large, the function ${\mathcal E}^+_j-E_j((\tilde{b}(b^{-1}(k),\tilde{b})$ is strictly decreasing for $k$ big \cite[Theorem 3.2]{manpu}.  We denote by $\rho_j$ its inverse, which is defined at least in an interval of the form $(0,\gamma)$, $\gamma >0$.  It is obvious that $\lim_{w \downarrow 0} \rho_j(w)=\infty$. Moreover, from Lemma 4.8 in \cite{shi2}, we know that \eqref{may14} implies that  ${\mathcal E}^+_j -E_j(\tilde{b}(b^{-1}(k),\tilde{b})=O(k^{-M})$, $k\to \infty$, and then
\bel{sep29b} \rho_j(w)= O(w^{-1/M}),\quad w\downarrow 0.
\ee
For $j\in\N$,  $\delta \in (0,1)$ and $\lambda >0$ set $\varrho=\varrho(\lambda):=\rho_j(\delta\lambda)$. Then \eqref{sep25d} implies that
\bel{sep29}({\mathcal E}^+_j-E_j(k)+\lambda)^{-1}\geq ((1+\delta)\lambda)^{-1}, \quad \mbox{for all}\,k \geq\varrho(\lambda).\ee
Therefore, for all $r>0$ and $\delta \in (0,1)$
\bel{apr19}\begin{array}{ll}
&n_+\left(r;V^{1/2}T_{j,\infty}(\lambda)V^{1/2}\right)
\geq n_+\left(r;V_\varepsilon^{1/2}T_{j,\infty}(\lambda)V_\varepsilon^{1/2}\right)\\
\geq &n_+\left(r;V_\varepsilon^{1/2}T_{j,\infty}(\lambda,\varrho)V_\varepsilon^{1/2}\right)
\geq n_+\left(r(1+\delta)\lambda;V_\varepsilon^{1/2}P_{j,\infty}(\varrho)V_\varepsilon^{1/2}\right).\\
\end{array}\ee
 In the first and the second inequality we have used the min-max principle, while for the third inequality we used \eqref{sep29}.

Next, using the Weyl inequalities, for any  $\lambda>0$ and $\delta \in (0,1)$
\bel{apr20_a}\begin{array}{ll} &\displaystyle{n_+\left(\lambda ;V_\varepsilon^{1/2}P_{j,\infty}(\varrho)V_\varepsilon^{1/2}\right)}\\
\geq &\displaystyle{ n_+(\lambda(1+\delta) ;V_\varepsilon^{1/2}P_{j,\infty} V_\varepsilon^{1/2})
  - n_+\left(\lambda\delta ;V_\varepsilon^{1/2}(P_{j,\infty}-P_{j,\infty}(\varrho))V_\varepsilon^{1/2}\right).}\end{array}\ee

The term $n_+(\lambda;V_\varepsilon^{1/2}P_{j,\infty} V_\varepsilon^{1/2})=n_+(\lambda;P_{j,\infty} V_\varepsilon P_{j,\infty})=n_+(r;{\mathcal V}_{\varepsilon,j})$ was already obtained in \eqref{may7}, and its asymptotic behavior  can be  estimated as in subsection \ref{sub_upp}.

For  the second term in \eqref{apr20_a} we have that \eqref{nov13} implies
\bel{sep29d}\begin{array}{ll} &\displaystyle{n_+\left(\lambda ;V_\varepsilon^{1/2}(P_{j,\infty}-P_{j,\infty}(\varrho))V_\varepsilon^{1/2}\right)}\\
=&\displaystyle{ n_+\left(\lambda ;\int_{(-\infty,\varrho]}^\oplus \pi_{j,\infty}(k)\,dk \mathcal{F}V_\varepsilon\mathcal{F}^*\int_{(-\infty,\varrho]}^\oplus \pi_{j,\infty}(k)\,dk\right)}\\
=&\displaystyle{n_+\left(\lambda;\one_{(-\infty,\varrho]}{\mathcal F}P_{j,\infty} V_\varepsilon P_{j,\infty}{\mathcal F}^*\one_{(-\infty,\varrho]}\right)=n_+\left(\lambda;\one_{(-\infty,\varrho]}{\mathcal V}_{\varepsilon, j}\one_{(-\infty,\varrho]}\right).}\end{array}\ee

Let $\chi_\lambda(x):=\chi_\varepsilon(-x+\rho_j(\delta\lambda))=\chi_\varepsilon(-x+\varrho(\lambda)$, the same $\chi_\varepsilon$ of the preceding subsection. Then $\chi_\lambda$ is a smooth function with bounded derivatives such that $0\leq \chi_\lambda \leq 1$, $\chi_\lambda(x)=0$ for $x\geq\varrho(\lambda)+2\varepsilon$ and $\chi_\lambda(x)=1$ for $x\leq\varrho(\lambda)+\varepsilon$. It is important to note that for all positive $\lambda$, $\varepsilon$, and  $\delta \in (0,1)$,  $\chi_\lambda \in S_0^0$ and  its semi-norms $n_{\alpha,\beta}^{0,0}(\chi_\lambda)$ (defined by \eqref{sep25b}) are independent of $\delta$ and $\lambda$ for all $(\alpha,\beta)\in\Z_+^2$. Indeed,
$$n_{\alpha,\beta}^{0,0}(\chi_\lambda)=\left\{\begin{array}{ll} 0&;\alpha>0\\||\chi_\varepsilon^{(\beta)}||_{L^\infty(\R)}&;\alpha=0.\end{array}\right.$$

Write as before  $Op^W(\widetilde{V}_\varepsilon)$ for the  Pseudo-differential operator with Weyl symbol $\widetilde{V}_\varepsilon$. Then, since for all $\lambda >0,$ $\chi_\lambda \one_{(-\infty,\varrho(\lambda)]}=\one_{(-\infty,\varrho(\lambda)]}$, we have   that
 \bel{may11}\begin{array}{ll}&\one_{(-\infty,\varrho]}{\mathcal V}_{\varepsilon,j}\one_{(-\infty,\varrho]}=\one_{(-\infty,\varrho]}Op^W(\widetilde{V}_\varepsilon)\one_{(-\infty,\varrho]}+
 \one_{(-\infty,\varrho]}\left({\mathcal V}_{\varepsilon,j}-Op^W(\widetilde{V}_\varepsilon)\right)\one_{(-\infty,\varrho]}\\
  =&\one_{(-\infty,\varrho]}\left(Op^W(\widetilde{V}_\varepsilon)Op^W(\chi_\lambda)\right)\one_{(-\infty,\varrho]}+\one_{(-\infty,\varrho]}\left({\mathcal V}_{\varepsilon,j}-Op^W(\widetilde{V}_{\varepsilon})\right)\one_{(-\infty,\varrho]}.\end{array}
\ee
The symbol $\widetilde{V}_\varepsilon \in S^{-m}_0$ and $\chi_\lambda \in S_0^0$, then it is well known that \cite[Theorem 18.5.4]{hor3}  \bel{oct8}Op^W(\widetilde{V}_\varepsilon)Op^W(\chi_\lambda)=Op^W(\widetilde{V}_\varepsilon\chi_\lambda)+Op^W(R_\lambda),\ee
where $R_\lambda\in S_0^{-m-1}$, and each one of  its semi-norms $n_{\alpha,\beta}^{0, -m-1}(R_\lambda)$ is polynomially bounded by a finite number of  semi-norms of $\widetilde{V}_\varepsilon$ and $\chi_\lambda$ in $S_0^{-m}$ and  $S_0^0$, respectively. Since the semi-norms of $\chi_\lambda$ are independent of $\lambda$,   \cite[Lemma 4.7]{dauro} implies that there exists a positive constants  $\lambda_0$ such that
\bel{oct8a} n_+(\lambda;Op^W(R_\lambda))=O(\lambda^{-2/(m+1)}), \quad \mbox{for}\,\lambda \in (0,\lambda_0].\ee

\begin{lemma}\label{sep}
For every $\varepsilon >0$
$$\lim_{\lambda \downarrow 0} \frac{n_+(\lambda;Op^W(\widetilde{V}_\varepsilon\chi_\lambda))}{\lambda^{-2/m}}=0.$$
\end{lemma}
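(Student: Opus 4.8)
The plan is to dominate $n_+(\lambda;Op^W(\widetilde V_\varepsilon\chi_\lambda))$ by the volume function of a well-chosen $\lambda$-dependent weight, and to exploit that $\chi_\lambda$ confines the symbol to $\{x<\varrho(\lambda)+2\varepsilon\}$ with $\varrho(\lambda)=O(\lambda^{-1/M})$, $M>m$. Put $a_\lambda:=\widetilde V_\varepsilon\chi_\lambda$. Since $\widetilde V_\varepsilon\in S^{-m}_0$ and $\chi_\lambda\in S^0_0$ with seminorms independent of $\lambda$, the product $a_\lambda$ lies in $S^{-m}_0$ uniformly in $\lambda$. Moreover $a_\lambda$, together with all its derivatives, vanishes for $x\leq b(-2\varepsilon)$ (because $\widetilde V_\varepsilon(x,\xi)=\chi_\varepsilon(b^{-1}(x))V(b^{-1}(x),-\xi)$ is flat there) and for $x\geq\varrho(\lambda)+2\varepsilon$ (because $\chi_\lambda(x)=\chi_\varepsilon(-x+\varrho(\lambda))$ is flat there). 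Fix an integer $N>m$ and set
$$w_\lambda(x,\xi):=\langle x,\xi\rangle^{-m}\bigl(1+x^{2}/\varrho(\lambda)^{2}\bigr)^{-N/2}.$$
I claim that, for $\lambda$ small enough that $\varrho(\lambda)\geq1$, one has $a_\lambda\in S(w_\lambda,g)$ with seminorms bounded uniformly in $\lambda$, while $w_\lambda$ is an admissible ($g$-continuous, $\sigma$-temperate) weight with constants independent of $\lambda$. The only delicate point is that $\chi_\lambda\equiv1$ for all very negative $x$, where the second factor of $w_\lambda$ decays; this is harmless because $a_\lambda$ and its derivatives are supported in $\{b(-2\varepsilon)<x<\varrho(\lambda)+2\varepsilon\}$, on which $x^{2}/\varrho(\lambda)^{2}$ stays bounded uniformly in $\lambda$ (as $|b(-2\varepsilon)|\leq 2B_+\varepsilon$ and $\varrho(\lambda)\geq1$), so the second factor of $w_\lambda$ is bounded below there by a positive constant; hence the estimates defining $a_\lambda\in S^{-m}_0$ transfer to $a_\lambda\in S(w_\lambda,g)$.

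Now I would apply \cite[Lemma 4.7]{dauro}, whose constant depends only on the metric $g$ and on finitely many $S(w,g)$-seminorms of the symbol, hence is $\lambda$-uniform here; it gives $n_+(\lambda;Op^W(a_\lambda))=O(N(\lambda,w_\lambda))$ as $\lambda\downarrow0$. It remains to bound $N(\lambda,w_\lambda)=\frac{1}{2\pi}vol\{(x,\xi);\langle x,\xi\rangle^{-m}(1+x^{2}/\varrho(\lambda)^{2})^{-N/2}>\lambda\}$. Since the second factor is $\leq1$, the inequality forces $|x|<\lambda^{-1/m}$, and at each fixed $x$ it confines $\xi$ to an interval of length at most $C\lambda^{-1/m}(1+x^{2}/\varrho(\lambda)^{2})^{-N/(2m)}$. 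Integrating in $x$, substituting $u=x/\varrho(\lambda)$, and using $N>m$ so that $\int_{\R}(1+u^{2})^{-N/(2m)}\,du<\infty$, we get $N(\lambda,w_\lambda)\leq C'\lambda^{-1/m}\varrho(\lambda)$. By \eqref{sep29b} $\varrho(\lambda)=\rho_j(\delta\lambda)=O(\lambda^{-1/M})$, hence $N(\lambda,w_\lambda)=O(\lambda^{-1/m-1/M})=o(\lambda^{-2/m})$ because $M>m$, and therefore $n_+(\lambda;Op^W(\widetilde V_\varepsilon\chi_\lambda))=o(\lambda^{-2/m})$, which is the assertion.

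The main obstacle is the construction of $w_\lambda$ in the first step: one needs a single $g$-weight for which $a_\lambda$ sits in the associated symbol class with $\lambda$-uniform seminorms and whose sublevel set $\{w_\lambda>\lambda\}$ has volume $o(\lambda^{-2/m})$. The tension — that $\chi_\lambda$ is itself not controlled by a good weight decaying as $x\to-\infty$ — is resolved by combining the $\xi$-decay of $\widetilde V_\varepsilon$ with its flat vanishing for $x\leq b(-2\varepsilon)$, which renders the behavior of $w_\lambda$ near $x=-\infty$ irrelevant. (If one prefers to invoke \cite[Lemma 4.7]{dauro} only for the power-type weights $S^q_p$, the same conclusion follows by splitting $a_\lambda=\widetilde V_\varepsilon\theta+\widetilde V_\varepsilon\chi_\lambda(1-\theta)$ with a fixed cutoff $\theta$, the first term being a $\lambda$-independent symbol of compact $x$-support and the second supported in $\{x>R_0\}$ where the weight $\langle x,\xi\rangle^{-m}(1+x^{2}/\varrho(\lambda)^{2})^{-N/2}$ is unambiguous.) The remaining steps are the routine volume computation above and the cited bound from \cite{dauro}.
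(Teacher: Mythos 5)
Your overall strategy is sound and it ends with exactly the same quantitative facts the paper uses: a phase-space volume of order $\lambda^{-1/m}\varrho(\lambda)$ together with $\varrho(\lambda)=O(\lambda^{-1/M})$, $M>m$, from \eqref{sep29b}. But the route is genuinely different from the paper's, and its load-bearing step is shakier. The paper does not introduce any $\lambda$-dependent weight: it applies \cite[Proposition 4.1]{dauro}, which bounds $n_+(\lambda;Op^W(\widetilde V_\varepsilon\chi_\lambda))$ by $N(\lambda,\widetilde V_\varepsilon\chi_\lambda)+C\lambda^{-2/m+\zeta}$, where the symbol stays in the fixed class $S_0^{-m}$ with $\lambda$-uniform seminorms (so the only uniformity issue is the one the paper acknowledges, that Proposition 4.1 is stated for $\lambda$-independent symbols), and then the confinement $b(-2\varepsilon)\le x\le\varrho(\lambda)+2\varepsilon$ is used directly on the superlevel set of the symbol itself, giving \eqref{aug19}. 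You instead encode the confinement into a new weight $w_\lambda=\langle x,\xi\rangle^{-m}(1+x^2/\varrho(\lambda)^2)^{-N/2}$ and invoke \cite[Lemma 4.7]{dauro} for the class $S(w_\lambda,g)$. Your verifications that $a_\lambda\in S(w_\lambda,g)$ with $\lambda$-uniform seminorms and that $w_\lambda$ is uniformly admissible are fine, and the volume computation $N(\lambda,w_\lambda)=O(\lambda^{-1/m}\varrho(\lambda))$ is correct.

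The gap is in the appeal to \cite[Lemma 4.7]{dauro} itself. In this paper that lemma is only ever used for fixed power-type weights $\langle x\rangle^p\langle x,\xi\rangle^q$ (with $\lambda$-dependence at most in the symbol, through uniformly bounded seminorms, as in \eqref{oct8a}); you need it for a non-power, $\lambda$-dependent weight, with constants depending only on the structure constants of $(w_\lambda,g)$. That is a plausible but unverified extrapolation, and it is precisely the point your parenthetical fallback is supposed to cover — yet the fallback does not work as sketched: after splitting off the compactly supported piece, the remaining piece $\widetilde V_\varepsilon\chi_\lambda(1-\theta)$ is not placed in any admissible power-weight class; bounding it by $\varrho^{-N}\langle x\rangle^{N}\langle x,\xi\rangle^{-m}$ puts it in $S^{-m}_N$, whose weight does not tend to zero and has infinite volume function, so Lemma 4.7 (as used in the paper) gives nothing there, and one is simply back to the non-power weight. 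So either you prove the general-weight, uniform-constant version of the bound $n_+(\lambda;Op^W(a))=O(N(c\lambda,w))$ yourself, or you follow the paper's simpler path: apply \cite[Proposition 4.1]{dauro} to $\widetilde V_\varepsilon\chi_\lambda$ (noting, as the paper does, that the constant depends only on finitely many $S_0^{-m}$ seminorms, which are $\lambda$-uniform) and estimate $N(\lambda,\widetilde V_\varepsilon\chi_\lambda)$ directly from the support strip — the same arithmetic you already carried out for $w_\lambda$.
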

\begin{proof}
By  \cite[Proposition 4.1]{dauro}, there are positive constants $C_\lambda$ and  $\zeta$ such that
\bel{oct6}n_+(\lambda;Op^W(\widetilde{V}_\varepsilon\chi_\lambda))\leq N(\lambda,\widetilde{V}_\varepsilon\chi_\lambda)+C_\lambda \lambda^{(-2/m)+\zeta}.\ee
The constant $C_\lambda$ depends polynomially on a finite number of semi-norms of the  symbol $\widetilde{V}_\varepsilon\chi_\lambda$, but  from composition of symbols each one of  the semi-norms of $\widetilde{V}_\varepsilon\chi_\lambda$ is polynomially bounded by a finite number of semi-norms of  $V$ in $S_0^{-m}$ and $\chi_\lambda$ in $S_0^0$. Consequently, the constant $C_\lambda$ can be taken independent of $\lambda$.

The proof of \eqref{oct6} that appears in \cite{dauro} is for symbols that do not depend on $\lambda$. However, it works as well in our case  just introducing  minor changes.

Now, since  $(\widetilde{V}_\varepsilon\chi_\lambda)(x,\xi)=V(b^{-1}(x),-\xi)\chi_\varepsilon(b^{-1}(x))\chi_\lambda(x)$, where the support of $\chi_\varepsilon(b^{-1}(x))\chi_\lambda(x)$ is contained on the strip $\{(x,\xi)\in\R^2;b(-2\varepsilon)\leq x\leq \varrho(\lambda)+2\varepsilon\}$, and  $V$ is in $S_0^{-m}$,   the set $\{(x,\xi)\in \R^2;\widetilde{V}_\varepsilon\chi_\lambda(x,\xi)>\lambda\}$ is contained in
$$ \{(x,\xi)\in \R^2;\langle b^{-1}(x),\xi\rangle^{-m}>\lambda,\, b(-2\varepsilon)\leq x\leq \varrho(\lambda)+2\varepsilon\}.$$
Then,
\bel{aug19}N(\lambda,\widetilde{V}_\varepsilon\chi_\lambda)=O(\lambda^{-1/m}\,((\varrho(\lambda)+2\varepsilon)-b(-2\varepsilon))).\ee
Putting together \eqref{oct6}, \eqref{aug19} and \eqref{sep29b}  (recalling that $M>m$), we finish the proof of the Lemma.
\end{proof}

Gathering   \eqref{may11}, Lemma \ref{shirai}, \eqref{oct8}, \eqref{oct8a} and  Lemma \ref{sep} we obtain
\bel{sep29c}\lim_{\lambda\downarrow 0}\frac{n_+\left(\lambda;\one_{(-\infty,\varrho]}{\mathcal V}_{\varepsilon_j}\one_{(-\infty,\varrho]}\right)}{\lambda^{-2/m}}=0,
\ee
thus, for all $\delta \in (0,1)$ \eqref{2}, \eqref{apr19}, \eqref{apr20_a}, \eqref{sep29d}, \eqref{sep29c} and Lemma \ref{lpm} imply
$$\liminf_{\lambda \downarrow 0} \frac{{\mathcal N}_j(\lambda)}{B_+N(\lambda(1+\delta), {V},0)}\geq\liminf_{\lambda \downarrow 0}\frac{n_+(\lambda(1+\delta);Op^W(\tilde{V}_\varepsilon))}{N(\lambda(1+\delta), \widetilde{V}_\varepsilon)}.$$
Finally, arguing as in the last part of  subsection \ref{sub_upp} we can obtain the lower bound in \eqref{the4}.\\

{\bf Acknowledgements.} This Project was partially supported by  Vicerrector\'ia de la Investigaci\'on de la Pontificia Universidad Cat\'olica de Chile, grant \emph{Inicio} 43/2014.\\
\bibliographystyle{plain}
\bibliography{biblio}
{\sc Pablo Miranda}\\
Departamento de Matem\'atica,
Facultad de Matem\'aticas,\\
Pontificia Universidad Cat\'olica de Chile,
Vicu\~na Mackenna 4860, Santiago de Chile\\
E-Mail: pmirandar@mat.puc.cl\\
\end{document}